\newcommand{\leqnomode}{\tagsleft@true\let\veqno\@@leqno}
\newtheorem{theorem}{Theorem}[section]
\newtheorem{lemma}[theorem]{Lemma}
\newtheorem{prop}[theorem]{Proposition}
\newtheorem{coro}[theorem]{Corollary}
\newtheorem{example}[theorem]{Example}
\newtheorem{remark}[theorem]{Remark}
\newcommand{\PSH}{\textup{PSH}}
\newcommand{\vp}{\varphi}
\newcommand{\R}{\mathbb{R}}
\title{Lines in the space of K\"ahler metrics}
\author{Tam\'as Darvas, Nicholas McCleerey}
\date{}
\begin{document}
\maketitle
\begin{abstract}
We establish a Ross–Witt Nystr\"om correspondence for weak geodesic lines in the (completed) space of K\"ahler metrics. We construct a wide range of weak geodesic lines on arbitrary projective K\"ahler manifolds that are not generated by holomorphic vector fields, in the process disproving a folklore conjecture popularized by Berndtsson. Remarkably, some of these weak geodesic lines turn out to be smooth. In the case of Riemann surfaces, our results can be significantly sharpened. Finally, we investigate the validity of Euclid’s fifth postulate for the space of K\"ahler metrics.
\end{abstract}

\section{Introduction}

Suppose that $(X,\omega)$ is a K\"ahler manifold. By $\mathcal H_\omega \subset C^\infty(X)$ we denote the space of K\"ahler potentials:
$$\mathcal H_\omega := \{u \in C^\infty \ |\  \omega_u:=\omega + i \partial \bar \partial u > 0\}.$$

The search for a canonical representative in $\mathcal{H}_\omega$ has a long and rich history, tracing back to Calabi’s influential conjectures \cite{Cal} and the celebrated theorems of Yau \cite{Yau}. In connection with this program, it has become clear that one must equip the space $\mathcal{H}_\omega$ with special $L^p$-type Finsler geometries, with the choice of $p$ varying depending on the context and specific problem considered \cite{Da15}:
$$\| \phi\|_{L^p} = \bigg(\int_X | \phi|^p \omega_u^n \bigg)^{\frac{1}{p}}, \ \ \ u \in \mathcal H_\omega, \  \phi \in C^\infty(X) \simeq T_u \mathcal H_\omega.$$

The case $p=2$ is the classical Riemannian metric of Mabuchi, Semmes, and Donaldson \cite{Ma,Se,Do}, relevant in the study of the Calabi flow \cite{CC, St, BDL1}, while the case $p=1$ has become increasingly relevant in the study of stability questions \cite{BBJ, BDL2, CC1,CC2}. By \cite[Theorem 2]{Da15} the $d_p$-metric completions of $\mathcal H_\omega$ coincide with $(\mathcal E^p_\omega,d_p)$, the finite energy spaces introduced by Guedj-Zeriahi \cite{GZ} (building on work of Cegrell in the local case \cite{Ce98}).

By $\PSH_\omega=\{ u \in L^1(X)\ |\ u\textup{ is u.s.c., } \omega + i\partial \bar \partial u \geq 0\}$ we denote the space of $\omega$-psh functions on $X$. For $1 \leq p \leq p'$, recall the following inclusions \cite{GZ}:
$$L^\infty \cap \PSH_\omega \subset \mathcal E^{p'}_\omega \subset \mathcal E^p_\omega \subset \PSH_\omega.$$ 
It is well known that the $(\mathcal E^p_\omega,d_p)$ are geodesic metric spaces, with various geometric convexity properties \cite{Da15,CC3,DL19}.

Fix $p\geq 1$. We are interested in studying (weak) {\it $L^p$ geodesic lines}, by which we mean families $\mathbb{R} \ni t \to u_t \in \mathcal E^p_\omega$ such that for any $t,t' \in \R$:
\begin{equation}\label{eq: d_p_geod}
d_p(u_t,u_{t'}) = |t-t'|d_p(u_0,u_1).
\end{equation}
We also insist that $u(x, s):= u_{\textup{Re }s}(x) \in \textup{PSH}_{\pi^*\omega}(X \times \Bbb C)$, where $\pi: X \times \Bbb C \to X$ is the projection onto the first component, to rule out anomalies which arise in the case of $p=1$ (see \cite[Theorem 1.2]{DL19} and the discussion preceding it).

We will say that a geodesic line $t \to u_t$ is  \emph{bounded} if  $u_t \in L^\infty \cap \textup{PSH}_\omega$. As pointed out in \cite{Da15}, bounded geodesic lines are $L^p$ geodesic lines for all $p \geq 1$. What is more, bounded lines satisfy the following global degenarate complex Monge--Amp\`ere equations:
$$(\pi^* \omega + i \partial \bar \partial u)^{n+1} = 0  \ \textup{ on } \ X \times \Bbb C.$$
So from a purely analytic point of view, in the case of bounded potentials, our goal in this paper is to characterize solutions to the above global partial differential equation.

\paragraph{Ross--Witt Nystr\"om correspondence for lines.}Our first result will provide a characterization of geodesic lines in terms of their Legendre transform, in the spirit of the various Ross--Witt Nystr\"om correspondences for geodesic rays (i.e. families as above but for $t\in [0, \infty)$)  \cite{RWN14,Da17,DDL3,DX22,HTX}. 
Roughly speaking, such correspondences aim to characterize the geodesic condition, by the extremal properties of their time Legendre transform. See Section \ref{sec: prelim} for versions of this result for rays that will be needed in this work.

Let $t \to u_t$ be a subgeodesic line. By definition this just means that $u(x, s):= u_{\textup{Re }s}(x) \in \textup{PSH}_{\pi^*\omega}(X \times \Bbb C)$. Due to plurisubharmonicity, for any $x \in X$, $t \to u_t(x)$ will be finite and convex (or identically $-\infty$); thus we can consider the following $t$-Legendre transform:
\begin{equation}\label{eq: t-Leg_trns_def}
\hat u_\tau := \inf_{t \in \Bbb R} (u_t - t \tau), \ \tau \in \Bbb R.
\end{equation}
By the Kiselman minimum principle, for fixed $\tau$, either $\hat u_\tau \in \textup{PSH}_\omega$ or $\hat u_\tau \equiv -\infty$ \cite{Ki}. Moreover, basic properties of the Legendre transform show that $\Bbb R \ni \tau \to \hat u_\tau(x) \in [-\infty,\infty)$ is u.s.c. and concave for any $x \in X$.

Attempting to define the dual notion to a geodesic line, we will say that a curve 
$$\Bbb R \ni \tau \to v_\tau \in \PSH_\omega \cup \{-\infty\}$$ 
is a \emph{test line} if for any fixed $x \in X$, $\Bbb R \ni \tau \to v_\tau(x) \in [-\infty,\infty)$ is u.s.c. and concave, and moreover there exists $C>0$ such that $v_\tau \equiv -\infty$ for any $|\tau| \geq  C$.

It is not too difficult to see that the Legendre transform sends subgeodesic lines (of sublinear growth) to test lines (Proposition \ref{prop: RWN_sublinear_subgeod_lines}); the inverse is given by the inverse Legendre transform, which is defined on a test line $\tau \to v_\tau$ by:
$$\check v_t := \sup_{\tau \in \Bbb R} (v_\tau + t\tau), \ \ t \in \Bbb R.$$
We aim to identify the conditions on the test line $\tau \to v_\tau$ under which its inverse Legendre transform $t \to \check{v}_t$ defines a weak geodesic line. To do so, we need a few definitions.

For a test line $\tau \to v_\tau$ we introduce the maximal interval $(\tau_v^-,\tau_v^+) \subset \Bbb R$ where the potentials $v_\tau$ are not identically equal to $-\infty$:
$$\tau^-_v:= \inf\{\tau \in \Bbb R\ |\  v_\tau \not \equiv -\infty\}, \ \ \ \tau^+_v:= \sup\{\tau \in \Bbb R\ |\ v_\tau \not \equiv -\infty\}.$$

We then say that the test line $\tau \to v_\tau$ is $L^p$ if $\check v_0 = \sup_\tau v_\tau \in \mathcal E^p_\omega$; 
that it is \emph{bounded} if $\check v_0 \in L^\infty \cap \textup{PSH}_\omega$; and that it is \emph{zero mass} if $\int_X \omega^n_{v_\tau} = 0$ for any $\tau \in (\tau_v^-,\tau_v^+)$. Here and throughout the paper the measures $\omega^n_{v_\tau}$ are the non-pluripolar complex Monge--Amp\`ere measures of Guedj--Zeriahi \cite{GZ}.

\begin{theorem}\label{thm: RWN_lines}The map $\{u_t\}_t \to \{\hat u_\tau\}_\tau$ between $L^p$ (bounded) geodesic lines and  zero mass $L^p$ (bounded) test lines is bijective, with inverse $\{v_\tau\}_\tau \to \{\check v_t\}_t.$
\end{theorem}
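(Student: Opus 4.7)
The plan is to promote the already-available subgeodesic line / test line correspondence of Proposition \ref{prop: RWN_sublinear_subgeod_lines} to the geodesic level by matching the geodesic condition on the primal side with the zero mass condition on the dual side.

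As a first step I would verify that both maps $\{u_t\} \mapsto \{\hat u_\tau\}$ and $\{v_\tau\} \mapsto \{\check v_t\}$ are well-defined between the stated classes and are mutually inverse at the subgeodesic/test line level---an $L^p$ (resp. bounded) geodesic line is in particular a subgeodesic line of sublinear growth, to which Proposition \ref{prop: RWN_sublinear_subgeod_lines} applies. Fiberwise convexity of $t \to u_t(x)$ yields the Legendre biconjugate identity $\check{\hat u}_t = u_t$, so in particular $\sup_\tau \hat u_\tau = u_0$; this immediately translates the $L^p$ or boundedness assumption between the two sides. The content of the theorem then reduces to the equivalence: a subgeodesic line $\{u_t\}$ is an $L^p$ geodesic line if and only if the dual test line $\{\hat u_\tau\}$ is zero mass on $(\tau^-_{\hat u}, \tau^+_{\hat u})$.

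To establish this equivalence I would split the line at $t=0$ into two rays $t \to u_t$ and $t \to u_{-t}$, $t\ge 0$. The geodesic identity \eqref{eq: d_p_geod} is equivalent to both halves being $L^p$ geodesic rays emanating from $u_0$, and conversely two geodesic rays sharing their initial potential glue to a geodesic line by the triangle inequality. Applying the Ross--Witt Nyström correspondence for rays recalled in Section \ref{sec: prelim} to each half converts the geodesic ray property into the zero mass property of the corresponding (half) Legendre transform on the $\tau$-interval determined by its asymptotic slope. Gluing the two halves and checking that the full transform $\hat u_\tau$ coincides with the appropriate half transform on each side of $\tau=0$ then delivers the zero mass condition on all of $(\tau^-_{\hat u}, \tau^+_{\hat u})$; the converse direction runs identically.

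I expect the main obstacle to be carrying out this gluing cleanly. Concretely, one must verify that $\tau^\pm_{\hat u}$ agree with the asymptotic slopes $\lim_{t \to \pm\infty} u_t(x)/t$ in a manner uniform enough in $x$ to match the ray framework, that the two half-ray test rays together truly recover $\hat u$ (rather than a strictly smaller envelope), and that no zero mass data is dropped at the overlap near $\tau=0$. In the bounded case this amounts to propagating the homogeneous Monge--Amp\`ere equation $(\pi^*\omega + i \ddbar u)^{n+1}=0$ coherently across the two halves; in the $\mathcal E^p$ regime an additional layer of approximation by bounded geodesic lines, combined with the continuity of non-pluripolar Monge--Amp\`ere measures along decreasing sequences, will likely be required.
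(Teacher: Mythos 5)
There is a genuine gap, and it sits exactly where you anticipated trouble (``the main obstacle\dots{}the gluing''), but it is worse than a technical matter of uniformity: two of your structural claims are false. First, the ray-level Ross--Witt Nystr\"om correspondence (Theorems \ref{thm: max_test_curve_ray_duality} and \ref{thm: max_fin_en_test_curve_ray_duality}) pairs geodesic rays with \emph{maximal} test curves, not with zero mass ones. Indeed, for either half ray the transform satisfies $\hat u^{+}_\tau = u_0$ for $\tau \leq \tau^-_u$ (and similarly for $\hat u^-$), so $\int_X \omega^n_{\hat u^{\pm}_\tau} = V$ there; the zero mass condition concerns the \emph{full} transform $\hat u_\tau = \min(\hat u^+_\tau, \hat u^-_{-\tau})$ and is not a restatement of the geodesic ray property of either half. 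Your proposed dictionary ``geodesic ray on each half $\Leftrightarrow$ zero mass of the corresponding half transform'' therefore has no content to convert. Second, the claim that two geodesic rays sharing their initial potential ``glue to a geodesic line by the triangle inequality'' is false: that hypothesis produces only what the paper calls a geodesic \emph{turn} (Proposition \ref{prop: turn_RWN}), and generic turns are not lines --- the triangle inequality gives $d_p(u_{-s},u_t) \leq d_p(u_{-s},u_0)+d_p(u_0,u_t)$, whereas a line requires equality. Distinguishing turns from lines is precisely the nontrivial content of the theorem, and it is exactly the point at which the zero mass condition enters.

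The paper fills both gaps with arguments that your outline does not supply. For the forward direction it exploits translation invariance of a line: maximality of the turn gives $\textup{supp}\,\omega^n_{\hat u_\tau} \subseteq \{\hat u_\tau = u_{t_0} - t_0\tau\}$ for \emph{every} shift $t_0 \in \Bbb R$, and intersecting over all $t_0$ traps the support in the pluripolar set $\{\hat u_{\tau'} = -\infty\}$, forcing the measure to vanish. For the reverse direction it shows the Monge--Amp\`ere energy $t \to I(u_t)$ is affine by computing the slopes $I\{u^{\pm}_t\}$ via Corollary \ref{cor: maximiz_I_slope_for} and invoking the volume diamond inequality of \cite[Theorem 1.2]{DDL5} together with the identities $\max(\hat u^+_\tau, \hat u^-_{-\tau}) = u_0$ and $P(\hat u^-_{-\tau},\hat u^+_\tau) = \hat u_\tau$; the zero mass hypothesis is what makes $I\{u^+_t\} + I\{u^-_t\} \leq 0$, which combined with convexity yields affinity. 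Neither mechanism is recoverable from the ray correspondence applied separately to the two halves, so the proposal as written cannot be completed along the indicated lines.
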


By combining Theorem \ref{thm: RWN_lines} with Proposition \ref{prop: turn_RWN} below, it will follow that zero mass test lines are automatically maximal (see immediately before Proposition \ref{prop: turn_RWN} for the definition).

Although \cite{WN24} does not explicitly discuss geodesic lines, the injectivity in Theorem 1.1 can be deduced from the formula in \cite[Theorem 3.5]{WN24} under the additional assumption that all $\hat{v}_\tau$ are locally bounded outside a fixed analytic subset of $X$. However, this regularity property typically fails for the Legendre transform of a general weak geodesic line. Consequently, to establish Theorem 1.1, we instead rely on the framework of relative pluripotential theory developed in \cite{DDL2, DDL4}, with the volume diamond inequality from \cite{DDL5} playing a crucial role.

Theorem \ref{thm: RWN_lines} has several consequences. First, it immediately implies that $L^p$ geodesic lines are always $t$-Lipschitz; in particular, if $t \to u_t$ is an $L^p$ geodesic line such that $u_{t'}\in L^\infty$ for a fixed $t'\in\mathbb{R}$, then $u_t\in L^\infty$ for all $t\in\mathbb{R}$. This intriguing property has a `negative' consequence: the second postulate of Euclid must fail in $\mathcal E^p_\omega,$ $p \geq 1$. Indeed, there can be no weak line passing through a bounded and an unbounded element of $\mathcal E^p_\omega$, i.e., the $d_p$-geodesic segment connecting such potentials can not be extended to a line (c.f. \cite{Da21}).

\paragraph{Lines on Riemann surfaces.} In the case of a Riemann surface $(X,\omega)$, due to the Monge-Amp\`ere operator coinciding with the Laplacian, one can prove a much more precise Ross-Witt Nystr\"om correspondence for bounded lines:

\begin{theorem}\label{thm: class_RS}
Suppose that $(X,\omega)$ is a Riemann surface, and $t \to u_t$ is a bounded geodesic line. Then the Legendre transform \eqref{eq: t-Leg_trns_def} of $u_t$ is given by 
$$\hat u_\tau = \frac{\tau - \tau_v^-}{\tau_v^+ - \tau_v^-}\hat u_{\tau_v^+} + \frac{\tau_v^+ - \tau}{\tau_v^+ - \tau_v^-}\hat u_{\tau_v^-} + g(\tau),  \ \tau \in [\tau_v^-, \tau_v^+].$$ 
for some continuous concave function $g: [\tau_{v}^-, \tau_{v}^+] \to \Bbb R$, and $\hat u_\tau \equiv -\infty$ otherwise.

Conversely, any bounded subgeodesic curve $t \to u_t$, whose Legendre transform is as above, must be a bounded geodesic line.
\end{theorem}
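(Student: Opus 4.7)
By Theorem \ref{thm: RWN_lines}, $v_\tau := \hat u_\tau$ is a zero-mass bounded test line. Writing $\lambda(\tau) := (\tau - \tau_v^-)/(\tau_v^+ - \tau_v^-)$, the target identity says that
$$\tilde v_\tau := v_\tau - \lambda(\tau) v_{\tau_v^+} - (1-\lambda(\tau)) v_{\tau_v^-}$$
is constant in $x$ for each $\tau \in [\tau_v^-, \tau_v^+]$. The key simplification on a Riemann surface is that the complex Monge--Amp\`ere operator $\omega + i\ddbar(\cdot)$ is \emph{linear} in the potential, so the measures $\mu_\tau := \omega + i\ddbar v_\tau$ are honest positive Radon measures on $X$ of fixed total mass $\int_X \omega$, and the zero-mass hypothesis forces each of them to be supported in the polar locus $\{v_\tau = -\infty\}$. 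Since harmonic functions on the compact Riemann surface $X$ are constants, the target identity is equivalent, by taking $i\ddbar$, to the affine interpolation identity for the Monge--Amp\`ere measures
$$\mu_\tau \ = \ \lambda(\tau)\, \mu_{\tau_v^+} + (1-\lambda(\tau))\, \mu_{\tau_v^-}, \qquad \tau \in (\tau_v^-, \tau_v^+), \qquad (\ast)$$
with $g(\tau)$ arising as the integration constant in the $\omega$-Green function representation of $v_\tau$.

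The central difficulty is establishing $(\ast)$, and my plan is a comparison argument routed through Theorem \ref{thm: RWN_lines}. Set $w_\tau := \lambda(\tau) v_{\tau_v^+} + (1-\lambda(\tau)) v_{\tau_v^-}$ for $\tau \in [\tau_v^-, \tau_v^+]$ and $w_\tau \equiv -\infty$ otherwise. Concavity of $\tau \mapsto v_\tau(x)$ yields $w_\tau \leq v_\tau$, and a direct check shows that $w$ is u.s.c.\ and concave in $\tau$. Linearity of $\omega + i\ddbar$ on the Riemann surface gives $\omega + i\ddbar w_\tau = \lambda\mu_{\tau_v^+} + (1-\lambda)\mu_{\tau_v^-}$, which is supported in $\{v_{\tau_v^+} = -\infty\}\cup\{v_{\tau_v^-} = -\infty\} \subseteq \{w_\tau = -\infty\}$, so $w$ is itself a zero-mass test line. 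Theorem \ref{thm: RWN_lines} applied to $w$ therefore produces an $L^p$ geodesic line $\bar u_t := \check w_t$ dominated by $u_t$ and sharing its asymptotic Legendre data $v_{\tau_v^\pm}$ at $\tau_v^\pm$. A maximality/uniqueness argument for the homogeneous Monge--Amp\`ere equation $(\pi^*\omega + i\ddbar u)^2 = 0$ on the complex surface $X \times \Bbb C$---exploiting the rank-one degeneracy of the Levi form in (complex) dimension two together with the agreement of endpoint Legendre data---should then force $\bar u_t = u_t + c(t)$ for some real-valued function $c$ of $t$ alone; upon Legendre-transforming, this reads exactly as $(\ast)$.

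With $(\ast)$ in hand, $\tilde v_\tau$ has vanishing $i\ddbar$ as a distribution on $X$, hence is harmonic, hence constant on the compact Riemann surface; denote this constant by $g(\tau)$. Concavity of $g$ follows by evaluating $g(\tau) = v_\tau(x_0) - \lambda(\tau) v_{\tau_v^+}(x_0) - (1-\lambda(\tau)) v_{\tau_v^-}(x_0)$ at any $x_0$ where all three terms are finite, since the right-hand side is (concave in $\tau$) minus (affine in $\tau$). Continuity of $g$ on $[\tau_v^-, \tau_v^+]$ follows from its concavity on the interior together with the boundary values $g(\tau_v^\pm) = 0$ obtained by direct substitution. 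The principal obstacle I anticipate is the extremality argument underlying $(\ast)$---deducing that two weak geodesic lines on $X\times\Bbb C$, one dominating the other and sharing endpoint Legendre data, must differ by a function of $t$ alone---which is where the specifically complex two-dimensional nature of $X\times\Bbb C$ becomes essential.
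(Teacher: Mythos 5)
Your setup coincides with the paper's: normalize $\tau_v^\pm$, note that the endpoint potentials $v_{\tau_v^\pm}=\hat u_{\tau_v^\pm}$ are genuine zero--mass potentials (the paper's Remark \ref{rem: endpoints}), form the affine comparison curve $w_\tau=\lambda(\tau)v_{\tau_v^+}+(1-\lambda(\tau))v_{\tau_v^-}$, and observe that $\tau$-concavity gives $w_\tau\le v_\tau$ while linearity of $\omega+i\ddbar(\cdot)$ in complex dimension one makes each $w_\tau$ zero mass. At this point the theorem reduces to a statement about two potentials on $X$ for each \emph{fixed} $\tau$: if $w_\tau\le v_\tau$ are $\omega$-sh on a compact Riemann surface and both have vanishing non-pluripolar Monge--Amp\`ere mass, then $w_\tau-v_\tau$ is constant. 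The paper closes this in one line by citing \cite[Lemma 4.1]{ACCP}, a domination principle for potentials whose Laplacian charges only their polar set, applied fiberwise in $\tau$; everything after that (harmonicity of $\tilde v_\tau$, concavity and continuity of $g$, $g(\tau_v^\pm)=0$) is as in your last paragraph.

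This fiberwise step is exactly what your proposal does not supply, and the detour you propose in its place does not work. First, the ``maximality/uniqueness argument for the HCMA on $X\times\Bbb C$ exploiting rank-one degeneracy'' is not an argument --- you flag it yourself as the anticipated obstacle, and no such uniqueness statement is formulated or proved. Second, the intermediate conclusion you want, $\check w_t=u_t+c(t)$ with $c$ a function of $t$ alone, is generally \emph{false} in the very situation the theorem describes: if $v_\tau=w_\tau+g(\tau)$ with $g$ concave and nonlinear (vanishing at the endpoints, so the two test lines share all their endpoint Legendre data), then $\check v_t=\sup_\tau(w_\tau+g(\tau)+t\tau)$ and $\check w_t=\sup_\tau(w_\tau+t\tau)$ differ by a genuinely $x$-dependent quantity, since the optimizing $\tau$ depends on $x$. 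Relatedly, even granting $\check w_t=u_t+c(t)$, the $t$-Legendre transform of $u_t+c(t)$ is the infimal convolution $\inf_\sigma(\hat u_{\tau-\sigma}+\hat c_\sigma)$, not $\hat u_\tau+g(\tau)$, so the final ``upon Legendre-transforming, this reads exactly as $(\ast)$'' does not follow either. The repair is to drop the passage to $X\times\Bbb C$ entirely and prove (or cite) the one-dimensional domination principle for ordered zero-mass potentials on $X$; with that in hand the rest of your write-up is correct and agrees with the paper.
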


As evidenced by Example \ref{ref: not_Riemann}, this same result does unfortunately not hold in higher dimensions.

\begin{example}By the above result, bounded geodesic lines on Riemann surfaces are completely determined by the data
\[
(\tau_v^-,\tau_v^+, \hat u_{\tau_v^-}, \hat u_{\tau_v^+}, g),
\]
where $\hat u_{\tau_v^-}, \hat u_{\tau_v^+} \in \PSH_\omega$ are zero-mass potentials such that $\max(\hat u_{\tau_v^-}, \hat u_{\tau_v^+}) \in L^\infty(X)$. 
In this setting, the zero-mass condition means that the `Laplacian currents' $\omega_{\hat u_{\tau_v^-}}$ and $\omega_{\hat u_{\tau_v^+}}$ are supported on polar sets $P_-$ and $P_+$, respectively. 

The boundedness condition $\max(\hat u_{\tau_v^-}, \hat u_{\tau_v^+}) \in L^\infty(X)$ is automatically satisfied whenever $\overline{P_-} \cap \overline{P_+}$ is empty.
Since the Laplace equation on $X$ can be solved with a polar measure as right-hand side, the above result yields an abundant supply of bounded geodesic lines.
\end{example}

\paragraph{Examples on projective manifolds.} Using the criterion of Proposition \ref{prop: gen_crit}, Theorem \ref{thm: RWN_lines} produces explicit Fubini–Study type geodesic lines on projective manifolds which, somewhat surprisingly, have been overlooked in the literature.

\begin{example}\label{ex: proj}
Let $L \to X$ be an ample line bundle such that $c_1(L) = \{\omega\}$. Let $D_0,\ldots, D_n$ be divisors in the class $c_1(L)$ such that $\cap_j D_j = \emptyset$. Let $\varphi_j \in \textup{PSH}_\omega$ such that $\omega_{\varphi_j} = [D_j]$. 
\begin{equation}\leqnomode
u_t := 
\max(\varphi_0, \varphi_1 + t,\ldots, \varphi_n + t) \in \textup{PSH}_\omega \cap C^{0,1}, \ t \in \Bbb R \tag{i}
\end{equation}
defines a $C^{0,1}$ geodesic line.
\begin{equation}\leqnomode
v_t := \log ( e^{\varphi_0} + \sum_{j =1}^n  e^{\varphi_j + t}) \in \textup{PSH}_\omega \cap C^\infty, \ t \in \Bbb R\tag{ii}
\end{equation}
defines a $C^{\infty}$ geodesic line, which is parallel to $t\to u_t$.
\end{example}

We say that two geodesic lines $t \to u_t,v_t$ are \emph{parallel} if $t \to d_p(u_t,v_t)$ is bounded on $\Bbb R$ \cite{DL20}. Recall that the function $t \to d_p(u_t,v_t)$ is convex due to \cite[Theorem 1.5]{CC3}, hence $t \to d_p(u_t,v_t)$ is constant if and only if it is bounded.

\begin{remark}
By possibly increasing the power of the ample line bundle $L$, divisors $D_j$ as in the statement of the theorem can always be found. Moreover, by Bertini's theorem, a generic choice of divisors $D_0,\ldots, D_n$  in the class $c_1(L)$ satisfies $\cap_j D_j = \emptyset$ \cite[Lemma 8.9, p. 387]{Dem12}. Hence plenty of geodesic lines can be constructed on any Hodge K\"ahler manifold $(X,\omega)$.
\end{remark}

In the proof of both (i) and (ii) we will show that the Legendre duals of  $t \to u_t$ and $t \to v_t$ are bounded zero mass test lines, thus concluding via Theorem \ref{thm: RWN_lines}. That being said, given the basic nature of these examples, an alternative direct argument can also be given for why they are lines. We direct readers interested in this to the discussion directly following Corollary \ref{cor: proj_2}(ii). 

These examples  disprove a folklore conjecture in the field (mentioned in the abstract of \cite{Bern22}), which asserted that weak geodesic lines should be induced by the one-parameter action of a holomorphic vector field $V$ for which $i_V \omega$ is $\bar \partial$-exact; see \cite[Example 4.26]{Sze_book} for the construction. By \cite[Theorem 4.1]{Bern22}, all smooth \emph{regular} geodesic lines are of this form; we thank Song Sun for showing us an alternative argument for this fact based on \cite[Proposition 4.2.13]{CT}. Moreover, if $X$ is toric, then all toric geodesic lines on $X$ 
are also induced by vector fields \cite{Gu99}, lending further support to this conjecture. However, the examples in Example \ref{ex: proj} will not generally come from this construction, e.g. if $X$ does not admit any non-trivial holomorphic vector fields at all. 

\smallskip
The geodesic lines $t \to v_t$ constructed in Example \ref{ex: proj}(ii) are $C^\infty$, but typically not regular in the sense that $\omega_{v_t} \geq 0$ may not be K\"ahler, i.e., $t \to v_t$ is a smooth weak geodesic line. In light of \cite[Theorem 4.1]{Bern22}, regularity emerges as the essential condition enabling such theorems to hold.

\smallskip
Recall that $C^{1,1}$ regularity is optimal for weak geodesic segments joining smooth potentials \cite{Ch,CTW,DL12}. See \cite{Bl1} for complementary estimates and \cite{Bl2} for a comprehensive survey. In accordance with this, prior to our work, very few examples of segments or rays with higher regularity were known (cf.\ \cite{Do,AT03,Sun10}). Example~\ref{ex: proj} shows that, at least in the projective setting, smooth weak geodesic lines arise naturally and in large numbers.

\paragraph{Euclid's fifth postulate.}Earlier we pointed out that Euclid's second postulate fails in $(\mathcal E^p_\omega, d_p)$. Our last result concerns the validity of the fifth postulate, guaranteeing existence of parallel geodesic lines passing through outside points.

\begin{theorem}\label{thm: 5th_post} Let $p \geq 1$. Suppose that $\Bbb R \ni t \to v_t \in \PSH_\omega \cap L^\infty$ is a weak geodesic line and $u \in \textup{PSH}_\omega \cap L^\infty$. There exists a unique geodesic line $\Bbb R \ni t \to u_t  \in \PSH_\omega \cap L^\infty$ with $u_0 = u$ and $d_p(u_t,v_t) = d_p(u_0,v_0), \ t \in \Bbb R$ if and only if
\begin{equation}\label{eq: cond_5th_post}
u = \sup_{\tau \in \Bbb R} P[\hat v_\tau](u).
\end{equation}
Moreover, if \eqref{eq: cond_5th_post} holds, then $\hat u_\tau = P[\hat v_\tau](u)$ for all $\tau \in \Bbb R$.
\end{theorem}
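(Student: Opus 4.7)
The plan is to pass to the Legendre-dual test-line picture via Theorem~\ref{thm: RWN_lines}, which sets up a bijection between bounded geodesic lines and bounded zero-mass test lines. It will suffice to work at the level of the Legendre transforms $\hat u_\tau,\hat v_\tau$ of the sought line $u_t$ and the given line $v_t$. The key translation that I would first establish is: for bounded geodesic lines, $d_p(u_t,v_t)\equiv$ const.\ is equivalent to $\tau_u^\pm=\tau_v^\pm$ together with $[\hat u_\tau]=[\hat v_\tau]$ (same singularity type) for every $\tau$ in the common support.

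For the ``only if'' direction, suppose a parallel bounded line $u_t$ with $u_0=u$ exists. By the translation above, for each $\tau$ one has $\hat u_\tau\leq u_0=u$ and $\hat u_\tau$ belongs to the singularity class $[\hat v_\tau]$, hence by the definition of the relative envelope $\hat u_\tau\leq P[\hat v_\tau](u)$. Taking the supremum over $\tau$:
\[
 u=\sup_\tau\hat u_\tau\ \leq\ \sup_\tau P[\hat v_\tau](u)\ \leq\ u,
\]
which is \eqref{eq: cond_5th_post}. The pointwise identity $\hat u_\tau=P[\hat v_\tau](u)$ then follows from the domination principle in the relative full-mass class of \cite{DDL2}: both sides are zero-mass $\omega$-psh functions in the singularity class $[\hat v_\tau]$ with $\hat u_\tau\leq P[\hat v_\tau](u)$, forcing equality. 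Uniqueness of the parallel line through $u$ is then automatic.

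For the ``if'' direction, assume \eqref{eq: cond_5th_post} and set $w_\tau:=P[\hat v_\tau](u)$. The main task is to verify that $\tau\mapsto w_\tau$ is a bounded zero-mass test line. Concavity in $\tau$ uses concavity of $\tau\mapsto\hat v_\tau$: the convex combination $\lambda w_{\tau_0}+(1-\lambda)w_{\tau_1}$ is $\omega$-psh, bounded above by $u$, and has singularity type dominated by $[\hat v_{\lambda\tau_0+(1-\lambda)\tau_1}]$, hence is an admissible competitor in the envelope defining $w_{\lambda\tau_0+(1-\lambda)\tau_1}$. Upper semicontinuity in $\tau$ is inherited from that of $\tau\mapsto\hat v_\tau$ via monotonicity of the envelope operator. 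The zero-mass property $\int_X\omega_{w_\tau}^n=0$ is the envelope mass-preservation theorem of \cite{DDL2}, applicable because zero-mass test potentials are maximal (as noted right after Theorem~\ref{thm: RWN_lines}). Boundedness and $\sup_\tau w_\tau=u$ are precisely \eqref{eq: cond_5th_post}, and the support lies in $[\tau_v^-,\tau_v^+]$ since $w_\tau\equiv-\infty$ off this interval. Applying Theorem~\ref{thm: RWN_lines} yields the bounded geodesic line $u_t:=\check w_t$ with $u_0=u$ and $\hat u_\tau=w_\tau=P[\hat v_\tau](u)$; parallelism to $v_t$ follows from the matching of supports and singularity classes.

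The main obstacle is the characterization of parallelism in terms of the Legendre profiles. The implication ``matched $\Rightarrow$ parallel'' is manageable: shared supports and singularity classes constrain the growth of $u_t-v_t$, and combined with convexity of $t\mapsto d_p(u_t,v_t)$ from \cite[Theorem~1.5]{CC3}, force constancy. The delicate direction ``parallel $\Rightarrow$ matched'' — extracting pointwise-in-$\tau$ matching of singularity classes from the single scalar bound $d_p(u_t,v_t)=O(1)$ — will require the relative pluripotential machinery of \cite{DDL2,DDL4} and ideas close to the volume diamond inequality of \cite{DDL5}, and is the heart of the argument.
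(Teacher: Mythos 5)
Your high-level architecture agrees with the paper's: pass to the Legendre-dual picture, identify the candidate test line as $\tau\mapsto P[\hat v_\tau](u)$, and return via Theorem~\ref{thm: RWN_lines}. However, three load-bearing steps are left as gaps or are argued incorrectly. First, your ``key translation'' (parallel $\Leftrightarrow$ matched supports and singularity classes) is never proved; you explicitly defer the direction ``parallel $\Rightarrow$ matched'' to unspecified machinery from \cite{DDL2,DDL4,DDL5} and call it the heart of the argument. In the paper this step is short and elementary: after normalizing $v_0\leq u\leq v_0+C$, the uniqueness and monotonicity of parallel rays \cite[Proposition 4.1]{DL20} sandwich $v^{\pm}_t\leq u^{\pm}_t\leq v^{\pm}_t+C$, hence $\hat v_\tau\leq\hat u_\tau\leq\hat v_\tau+C$. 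In the converse direction, pointwise-in-$\tau$ matching of singularity classes only gives, for each $\tau$, some constant $C_\tau$ with $|\hat u_\tau-\hat v_\tau|\leq C_\tau$; parallelism needs $\sup_\tau C_\tau<\infty$, which you do not establish (in your ``if'' direction the uniform bound is in fact available from $P[\hat v_\tau](v_0)\leq P[\hat v_\tau](u)\leq P[\hat v_\tau](v_0+C)$, but you never extract it, relying instead on the unproved general equivalence).

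Second, the ``moreover'' identity $\hat u_\tau=P[\hat v_\tau](u)$ cannot follow from the domination principle: both potentials have \emph{zero} non-pluripolar mass, and two zero-mass potentials of the same singularity type with one dominating the other need not coincide (subtract a constant). The correct mechanism is maximality of the test line of a geodesic turn (Proposition~\ref{prop: turn_RWN}): $\hat u_\tau=P[\hat u_\tau](u)$, and since $P[\cdot](u)$ depends only on the singularity class, $P[\hat u_\tau](u)=P[\hat v_\tau](u)$. Third, you dismiss the $\tau$-upper semicontinuity of $\tau\mapsto P[\hat v_\tau](u)$ as ``inherited via monotonicity of the envelope operator''; this is precisely the point the paper flags as not immediate and devotes most of its proof to, replacing the candidate curve by the usc regularization of its inverse Legendre transform on $\mathbb{C}\times X$ and showing, via a pluripolarity and Fubini slicing argument, that the regularized curve agrees with $P[\hat v_\tau](u)$ for all $\tau\in(\tau_v^-,\tau_v^+)$. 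As written, your candidate curve is not known to be a test line, so Theorem~\ref{thm: RWN_lines} cannot yet be invoked.
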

Potentials on the right hand side of \eqref{eq: cond_5th_post} are called envelopes with respect to singularity type, and their definition is recalled in \eqref{eq: sing_env_def} below. 
Unfortunately, a typical potential $u \in \PSH_\omega \cap L^\infty$ does not satisfy \eqref{eq: cond_5th_post}, so that Euclid's fifth postulate fails (see the discussion after Theorem \ref{thm: 5th_post_1}). That being said, there exists a very convenient way to construct parallel lines to any $L^p$ geodesic line $t \to v_t$:

\begin{example}\label{ex: paral_line}
Let $g:[\tau^-_v,\tau^+_v] \to \Bbb R$ be a continuous concave function. Then it is straightforward to verify that $\tau \to w_\tau:= \hat v_\tau + g(\tau)$ is an $L^p$ zero mass test line, implying that $t \to \check w_t$ is an $L^p$ geodesic line.  Since $\check w_t - v_t$ is  uniformly bounded, so is $d_p(\check w_t,v_t)$, due to \cite[Theorem 5.5]{Da15}. 
Due to Buseman convexity of $\mathcal E^p_\omega$, the function $t \to d_p(\check w_t,v_t), \ t \in \Bbb R$ is convex, hence constant.  As a result, $t \to \check w_t$ is parallel to $t \to v_t$.
\end{example}

Due to Theorem \ref{thm: class_RS}, in the case of Riemann surfaces the above set of examples exhausts all bounded geodesic lines parallel to $t \to v_t$.

\paragraph{Flat embeddings and spherical buildings.}
Let $\varphi_j$ be as in Example \ref{ex: proj}. 
Using Corollary \ref{cor: proj_2} and its proof, it is not hard to show that the maps
\begin{equation}\label{eq: flat_maps}
 \Bbb R^n \ni (t_1,\ldots, t_n) \to \max(\varphi_0,\varphi_1+t_1,\ldots, \varphi_n + t_n) \in \PSH_\omega \cap C^{0,1}
 \end{equation}
are flat embeddings. 
In the projective case this allows one to have a plethora of flat embeddings of $\Bbb R^n$ into the space of (degenerate) K\"ahler metrics.

Donaldson conjectured that the geometry of the space of Kähler metrics should admit an infinite-dimensional spherical building structure, as a certain large scale $k$-limit of the spherical buildings attached to $SL(k)/SU(k)$ \cite{Do}
(see also \cite{Co}, \cite{Od15}, \cite{RWN23}). In the finite-dimensional setting of $SL(k)/SU(k)$,  the structure of the different flat embeddings of  $\Bbb R^k$ and their intersections underpin the associated spherical building.

While a precise definition of a building structure in the infinite-dimensional space of Kähler metrics remains elusive, several works have speculated on this using different perspectives. In \cite{RWN23}, the authors construct infinite-dimensional cones in the space of Kähler potentials arising from filtrations. Earlier, \cite{Od15, Co} interpreted the space of test configurations as a direct limit of Tits buildings, and investigated the asymptotic behavior of certain invariants along this limit. For connections with the non-Archimedean point of view we refer to \cite{BoEr,BoJ}.

To maintain a clear focus on geodesic lines and for the sake of brevity, we do not pursue the study of higher-dimensional flats and buildings in the present work. Building on the results and techniques of this work, we intend to return to this in a future correspondence.

\paragraph{Acknowledgments.} We thank B. Berndtsson, R. Berman, H. Guenancia, E. Di Nezza, L. Lempert, S. Sun, K. Zhang   for valuable conversations related to  geodesic lines. We thank D. Witt Nystr\"om for conversations related to the flat embeddings of \cite{RWN23} and beyond. We also thank V. Guedj for pointing out that our techniques allow to construct smooth weak geodesic lines, not just $C^{0,1}$ ones, as elaborated in Corollary \ref{cor: proj_2}(ii),  and the discussion following its proof.

The first author was partially supported by a Simons Fellowship and NSF grant DMS-2405274. This material is based upon work supported by the NSF under grant DMS-1928930, while the first author was in residence at the Simons Laufer Mathematical Sciences Institute (formerly MSRI) during the Fall of 2024.

\section{Preliminaries}\label{sec: prelim}

In this preliminary section we recall basic facts and prove a slightly novel version of the Ross--Witt Nystr\"om correspondence for geodesic rays that we will need later.
We fix a K\"ahler manifold $(X,\omega)$ with total volume $V = \int_X \omega^n$ for the whole paper. 

\paragraph{Subgeodesics and test curves.}

Fix $u_0\in\PSH_\omega$. We say that $[0,\infty) \ni t \to u_t \in \textup{PSH}_\omega$ is a \emph{subgeodesic ray}  emanating from $u_0$, if $u_{t} \to u_0$ in weak $L^1$ as $t \to 0$, and  $u(x,z) := u_{\textup{Re }z}(x) \in \PSH_{\pi^*\omega}(X\times \{\textup{Re }z >0\})$, where $\pi: X\times \Bbb C \to X$ is the projection onto the first component.

A subgeodesic ray is \emph{sublinear} if there exists $C >0$ such that $u_t(x) \leq C t + C$ for all $t\geq 0$, $x \in X$.

A \emph{psh geodesic ray} is a sublinear subgeodesic ray that additionally satisfies the following maximality property: for any $0 \leq  a < b$, the subgeodesic $(0,1) \ni t \mapsto v^{a,b}_t:=u_{a(1-t) + bt} \in \PSH_\omega$ can be recovered in the following manner:
\begin{equation}\label{eq: vabt_eq}
v^{a,b}_t:=\sup_{h \in \mathcal{S}}{h_t}\,,\quad t \in [0,1]\,,
\end{equation}
where $\mathcal{S}$ is the set of subgeodesics $(0, 1) \ni  t \to h_t \in \PSH_\omega$ such that $\lim_{t \searrow 0}h_t\leq  u_a$ and $\lim_{t \nearrow 1}h_t\leq u_b$ in weak $L^1$.

\smallskip Now we turn to the dual notion of test curves. Making small tweaks to \cite[Definition 5.1]{RWN14} and \cite[Definition 3.4]{DZ24}, a map $\mathbb{R}\ni \tau \mapsto \psi_\tau\in \PSH_\omega \cup\{-\infty\}$ is a \emph{psh test curve} if $\tau\mapsto \psi_\tau(x)$ is concave, decreasing, and usc for any $x\in X$; $\psi_\tau\equiv -\infty$ for all $\tau$ big enough; and $\psi_\tau \nearrow \psi_{-\infty} \in \textup{PSH}_\omega$ a.e.

For the rest of this note, we adopt the following convention: test curves/lines will always be parameterized by $\tau$, whereas geodesic rays/lines will be parametrized by $t$.  Consequently, $\{u_t\}_t$ (or $t \to u_t$) will always denote some kind of geodesic, whereas $\{v_\tau\}_\tau$ (or $\tau \to v_\tau$) will denote some type of test curve. Thus, the parameters $t$ and $\tau$ can also be regarded as dual to one another.

To any test curve $\tau \to \psi_\tau$ we also attach the following constant:
\begin{equation}\label{eq: tau_ray}
\tau_\psi^+ := \inf\{\tau \in \mathbb{R} : \psi_\tau \equiv -\infty\}\,.
\end{equation}

Recall from \cite{RWN14,DDL2} that for $\psi,\chi \in \textup{PSH}_\omega$ the envelope of the singularity type of $\chi$ with respect to $\psi$ is defined as
\begin{equation}\label{eq: sing_env_def}
P[\chi](\psi) : = \textup{usc}_X \big( \sup \{ v \in \textup{PSH}_\omega, v \leq \psi, \ \ v \leq \chi + C, \textup{ for some }C \in \Bbb R\}\big);
\end{equation}
here, $\textup{usc}_X$ is the upper semicontinuous regularization of a function on $X$.

A psh test curve $\tau \to \psi_\tau$  is \emph{maximal} if $P[\psi_\tau](\psi_{-\infty}) =\psi_\tau$ for any $\tau \in \mathbb{R}$, where we follow the convention $P[-\infty](\psi_{-\infty}) =-\infty$.

Given a test curve $\tau \to \psi_\tau$, its inverse Legendre transform is defined as
\begin{equation*}\label{eq: inverse_Lag_tran_def}
\check{\psi}_t(x):=\sup_{\tau \in \mathbb{R}} (\psi_\tau(x)+t\tau)\,, \quad t > 0,\, x \in X.
\end{equation*}

Given a subgeodesic ray $t \to \phi_t$, its Legendre transform is defined as 

 \begin{equation*}
 \hat \phi_\tau(x):= \inf_{t > 0} (\phi_t(x)-t\tau)\,,\quad \tau\in \mathbb{R},\, x \in X.
 \end{equation*}

The following result is proved in \cite[Theorem 3.7(ii)]{DX22} for $u =0$, and the argument for the more general result is exactly the same, requiring no change:

\begin{theorem} \label{thm: max_test_curve_ray_duality} Let $u \in \textup{PSH}_\omega$.
        The (inverse) Legendre transform $\{\psi_\tau\}_\tau \mapsto \{\check \psi_t\}_t$ gives a bijective map between maximal test curves with $\psi_{-\infty} = u$ and psh geodesic rays for which $\check \psi_0 = u$.  The inverse map is $\{\phi_t\}_t \mapsto \{\hat \phi_\tau\}_\tau$. 
\end{theorem}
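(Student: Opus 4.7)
The plan is to follow the architecture of the proof of \cite[Theorem 3.7(ii)]{DX22}, which treats the case $u = 0$, and verify the argument goes through verbatim once $u$ is allowed to play the role of common initial datum on both sides of the duality. The work splits into three parts.

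\textbf{Step 1: $\{\phi_t\}_t \mapsto \{\hat\phi_\tau\}_\tau$ lands in maximal test curves with $\hat\phi_{-\infty}=u$.} Given a psh geodesic ray $t \to \phi_t$ with $\phi_0 = u$, concavity, monotonicity, and upper semicontinuity of $\tau \to \hat\phi_\tau(x)$ are built into the Legendre transform. Kiselman's minimum principle, applied to the $S^1$-invariant $\pi^*\omega$-psh function $(x,z)\mapsto \phi_{\textup{Re }z}(x) - \tau\,\textup{Re }z$ on $X \times \{\textup{Re }z > 0\}$, yields $\hat\phi_\tau \in \PSH_\omega \cup \{-\infty\}$. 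Sublinearity forces $\hat\phi_\tau \equiv -\infty$ for $\tau \geq C$, so $\tau^+_{\hat\phi} < \infty$. To identify $\hat\phi_{-\infty} = u$: for $\tau$ sufficiently negative, convexity of $t \to \phi_t(x)$ in $t$ together with $\phi_0 = u$ forces the infimum defining $\hat\phi_\tau(x)$ to be attained at $t \to 0^+$, yielding $u(x)$.

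\textbf{Step 2: $\{\psi_\tau\}_\tau \mapsto \{\check\psi_t\}_t$ lands in psh geodesic rays with $\check\psi_0 = u$.} Given a maximal test curve $\tau \to \psi_\tau$ with $\psi_{-\infty} = u$, plurisubharmonicity of $(x,z) \mapsto \check\psi_{\textup{Re }z}(x)$ follows because each $(x,z)\mapsto \psi_\tau(x) + \tau\,\textup{Re }z$ is $\pi^*\omega$-psh and the usc-regularized supremum preserves this class. Sublinearity follows from $\tau^+_\psi < \infty$, while $\check\psi_0 = \sup_\tau \psi_\tau = \psi_{-\infty} = u$ by monotonicity.

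\textbf{Step 3: Bijectivity and the equivalence of maximality notions.} The core content — and the main obstacle — is to prove that the two maximality conditions, namely the subgeodesic sup-property \eqref{eq: vabt_eq} on the ray side and $P[\psi_\tau](\psi_{-\infty}) = \psi_\tau$ on the test-curve side, are exchanged under the Legendre transform. Pointwise Legendre biduality in the $t$/$\tau$ variable gives the soft inequalities $\check{\hat\phi}_t \leq \phi_t$ and $\hat{\check\psi}_\tau \geq \psi_\tau$. The reverse inequalities require maximality: to see $\hat\phi_\tau$ is envelope-maximal, one takes $v \in \PSH_\omega$ with $v \leq u$ and $v \leq \hat\phi_\tau + C$, produces the subgeodesic $t \to v + t\tau$ satisfying $v + t\tau \leq \phi_t + C$, and absorbs the extra additive constant by comparing against $\phi_t$ on longer and longer intervals $[0,T]$ via \eqref{eq: vabt_eq}, with a mild linear perturbation to kill the boundary defect as $T \to \infty$. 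The dual direction — upgrading $\check\psi$ to a \emph{maximal} subgeodesic ray from the envelope-maximality of $\psi$ — is symmetric, using the candidate subgeodesics appearing in \eqref{eq: vabt_eq} and rewriting them as Legendre-type suprema against $\psi_\tau$.

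I expect the absorption-of-constant step in part (3) — which is precisely where the two flavors of maximality have to be reconciled — to be the delicate point. Everything else reduces to Kiselman's principle, the stability of $\PSH_\omega$ under usc-regularized suprema, and routine Legendre-transform calculus. Since \cite[Theorem 3.7(ii)]{DX22} already provides a full proof for $u = 0$, and no property specific to the zero initial datum is used at any stage, the plan is to transcribe that argument with $\phi_0, \psi_{-\infty}$ replaced by $u$ and verify each step locally.
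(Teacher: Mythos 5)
Your proposal matches the paper's treatment exactly: the paper offers no independent proof of this theorem, stating only that it is \cite[Theorem 3.7(ii)]{DX22} for $u=0$ and that ``the argument for the more general result is exactly the same, requiring no change,'' which is precisely your plan of transcribing that argument with $u$ in place of $0$. Your three-step outline is a reasonable account of what that transcription involves, so the approach is correct and essentially identical to the paper's.
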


\paragraph{Bounded and finite energy rays.}

We discuss the same objects in the finite energy and bounded context. We say that a subgeodesic ray $t \to u_t$ is $L^p$ (resp. \emph{bounded}) if $u_t \in \mathcal E^p_\omega$ (resp. $u_t \in L^\infty$) for all $t \geq 0.$

By \cite[Theorem 2]{Da15}, an $L^p$ subgeodesic ray $t \to u_t$ is a psh geodesic ray if and only if it is an $L^p$ geodesic ray in the sense of \eqref{eq: d_p_geod} for $t,t' \in [0,\infty)$.

A test curve $\tau \to \psi_\tau$ is  \emph{bounded} if $\psi_{-\infty}\in L^\infty$ and there exists $\tau' \in \R$ such that $\psi_\tau = \psi_{-\infty}$ for all $\tau \leq \tau'$.

A test curve $\tau \to \psi_\tau$ has  \emph{finite energy} (or is $L^1$) if $\psi_{-\infty} \in \mathcal E^1$ and 
    \begin{equation*}\label{eq: fetestcurve_def}
        \int_{-\infty}^{\tau^+_\psi} \left( \int_X \omega_{\psi_\tau}^n-\int_X \omega_{}^n \right) \,\mathrm{d}\tau >-\infty\,.
    \end{equation*}
Recall that the Monge--Amp\`ere energy $I: \mathcal E^1_\omega \to \Bbb R$ is defined as
$$I(u):=\frac{1}{V} \sum_{j=0}^n u \omega_u^j \wedge \omega^{n-j}, \ u \in \mathcal E^1_\omega.$$
It is well known that $I$ is convex along $L^1$ subgeodesics and is affine precisely along $L^1$ (weak) geodesics \cite[Theorem 6.2]{BBGZ}.
As a result, for any $L^1$ subgeodesic ray $t \to u_t$ it is possible to define its Monge--Amp\`ere slope:
\[
I\{\phi_t\}:= \lim_{t \to \infty} \frac{I(\phi_t)}{t} \in \Bbb R \cup \{\infty\}.
\]

As we will see soon, the Legendre transform maps finite energy (resp. bounded) geodesic rays bijectively onto the space of maximal finite energy (resp. bounded) test curves (Theorem \ref{thm: max_fin_en_test_curve_ray_duality} below). Before doing so, we recall one additional concept which we will need in the proof.

\paragraph{Maximization of test curves.} Given a test curve $\tau \to \psi_\tau$, and a function $v\in\PSH_\omega$, by taking the envelope of singularity type with respect to $v$ along the curve, it possible to produce a maximal test curve $\{\psi^v_\tau\}$ with $\psi^v_{-\infty} = v$; we refer to this as the \emph{maximization} of $\{\psi_\tau\}$ (at $v$). For simplicity, we will only consider the case of $\psi_{-\infty}, v \in \mathcal{E}^1_\omega$ with $\psi_{\infty} \leq v$.

\smallskip
First, we introduce $w_\tau := P[\psi_\tau](v), \ \tau \in \Bbb R$. Since $\tau \to \psi_\tau$ is $\tau$-concave, so is $\tau \to w_\tau$. We claim that $w_\tau \nearrow v$ a.e. as $\tau \to -\infty$. To show this, note that by \cite[Theorem 3.8]{DDL2}: 
\begin{equation}\label{eq: DDL2ineq}
\omega_{w_\tau}^n = \omega_{P[\psi_\tau](v)}^n \leq \textbf{1}_{\{ v = w_\tau\}} \omega_v^n \leq \omega_v^n, \ \tau < \tau^+_\psi.
\end{equation}
Due to \cite[Theorem 2.3]{DDL2} we have that  $\int_X \omega_{\psi_\tau}^n \nearrow \int_X \omega_{\psi_{-\infty}}^n$. Since $\int_X \omega_{\psi_\tau}^n = \int_X \omega_{w_\tau}^n$ (\cite[Theorem 1.3]{DDL2}) and $\omega_{\psi_{-\infty}}^n=\int_X \omega_v^n$, we obtain that $\int_X \omega_{w_\tau}^n \nearrow \int_X \omega_v^n$.

Let $\psi_{-\infty} \leq w_{-\infty} := \lim_{\tau \to -\infty} w_\tau \leq v$. Since $\psi_{-\infty} \in \mathcal E^1_\omega$, $w_{-\infty} \in \mathcal E^1_\omega$ also. From \cite[Theorem 2.3, Remark 2.5]{DDL2} and \eqref{eq: DDL2ineq} we obtain $\omega_{w_{-\infty}}^n \leq \omega_v^n$. Since both measures have the same total mass, we have $\omega_{w_{-\infty}}^n = \omega_v^n$.  Uniqueness of complex Monge--Amp\`ere measures \cite{Din} and the fact that $w_{-\infty}$ and $v$ agree at least at one point gives $w_{-\infty} = v$, proving the claim.

Now, since $\tau \to w_\tau$ may not be $\tau$-usc, we define the maximization of $\tau \to \psi_\tau$ to be:
\begin{equation}\label{eq: maxim_def}
\psi^v_\tau := w_\tau = P[\psi_\tau](v), \ \tau \neq \tau^+_\psi,\text{ and } \ \psi^v_{\tau^+_\psi} := \lim_{\tau \searrow \tau^+_\psi}\psi^v_\tau.
\end{equation}
That $\psi^v_\tau = P[\psi^v_\tau](v)$ follows from \cite[Proposition 3.11 and Theorem 3.8]{DDL2}. Hence $\tau \to \psi^v_\tau$ is a maximal test curve with $\psi^{v}_{-\infty}=v$; additionally, we have $\tau^+_{\psi^v} = \tau^+_{\psi}$. Finally, due to \cite[Theorem 1.3]{DDL2}, the energy of a test curve does not change under maximization:
    \begin{equation}\label{eq: fetestcurve_inv}
        \int_{-\infty}^{\tau^+_{\psi^v}} \left( \int_X \omega_{\psi^v_\tau}^n-\int_X \omega_{}^n \right) \,\mathrm{d}\tau=\int_{-\infty}^{\tau^+_\psi} \left( \int_X \omega_{\psi_\tau}^n-\int_X \omega_{}^n \right) \,\mathrm{d}\tau.
    \end{equation}
\begin{remark}\label{rem: ray_max} There is a corresponding maximization procedure for a finite energy subgeodesic ray $t \to \phi_t$ with $\phi_0 \leq v$. Let $l>0$, and consider the $L^1$ geodesic segments $[0,l] \ni t \to \phi^l_t$ connecting $\phi^l_0 := v$ and $\phi^l_l := \phi_l$. Using the comparison principle, one can show that for fixed $t$ the potentials $\{\phi^l_t\}_l$ are increasing; moreover their limit $t \to \phi^v_t:= \lim_l \phi^l_t$ yields an $L^1$ geodesic ray emanating from $v$ satisfying
\begin{equation}
I\{\phi_t\} = I\{\phi^v_t\}.\label{maxim_I_slope}
\end{equation}
It will follow from the next theorem that the two maximization operations are dual to each other via the Legendre transform. Additionally, in case $t \to \phi_t$ is an $L^1$ geodesic, the $L^1$ ray $t \to \phi^v_t$ is the $d_1$-parallel ray to $t \to \phi_t$ that emanates from $v$ (see \cite[Proposition 4.1]{DL20}).
\end{remark}

\paragraph{Finite energy Ross--Witt Nystr\"om correspondence.} We conclude this section with a Ross--Witt Nystr\"om correspondence between maximal finite energy test curves and finite energy rays; the case of bounded rays and test curves follows along similar lines. 

\begin{theorem} \label{thm: max_fin_en_test_curve_ray_duality} Let $u \in \mathcal E^1_\omega$.
        The Legendre transform $\{\psi_\tau\}_\tau \mapsto \{\check \psi_t\}_t$ gives a bijective map between maximal finite energy test curves with $\psi_{-\infty} = u$ and $L^1$ geodesic rays for which $\check \psi_0 = u$.  
        
        The inverse map is $\{\phi_t\}_t \mapsto \{\hat \phi_\tau\}_\tau$. For the Monge--Amp\`ere slope we additionally have the formula
        \begin{equation}\label{eq: I_RWN_form}
            I_\omega \{\check \psi_t\}=\frac{1}{V}\int_{-\infty}^{\tau^+_\psi} \left(\int_X \omega_{\psi_\tau}^n-\int_X \omega_{}^n \right) \,\mathrm{d}\tau+\tau_{\psi}^+\,.
        \end{equation}
\end{theorem}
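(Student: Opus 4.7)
The plan is to build on Theorem \ref{thm: max_test_curve_ray_duality}, which already supplies a bijection between maximal psh test curves with $\psi_{-\infty}=u$ and psh geodesic rays with $\check\psi_0=u$. What remains is to check that, under this bijection, the finite-energy condition on the test-curve side matches the $L^1$ condition on the ray side, and that the slope formula \eqref{eq: I_RWN_form} holds.

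The first step is the bounded case. For a bounded maximal test curve $\tau\to\psi_\tau$, the ray $t\to\check\psi_t$ is automatically bounded, hence $L^1$. The slope formula reduces to a direct computation: at a.e.\ $(x,t)$ the derivative $\dot{\check\psi}_t(x)$ equals the $\tau$-maximizer of $\psi_\tau(x)+t\tau$, so the variational identity $\tfrac{d}{dt}I(\check\psi_t)=\tfrac{1}{V}\int_X \dot{\check\psi}_t\,\omega_{\check\psi_t}^n$, combined with a layer-cake decomposition and the Witt Nystr\"om mass identity $\int_X\mathbf{1}_{\{\dot{\check\psi}_t\geq\tau\}}\omega_{\check\psi_t}^n=\int_X\omega_{\psi_\tau}^n$ from \cite{RWN14,DDL2}, yields \eqref{eq: I_RWN_form} after integrating in $t$ and dividing by $t\to\infty$.

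For the finite-energy case, I would approximate the base potential: set $u_j:=\max(u,-j)\in L^\infty\cap\PSH_\omega$ and form the maximal bounded test curves $\tau\to\psi_\tau^{u_j}:=P[\psi_\tau](u_j)$ from \eqref{eq: maxim_def}. The mass identity $\int_X\omega_{\psi_\tau^{u_j}}^n=\int_X\omega_{\psi_\tau}^n$ \cite[Theorem 1.3]{DDL2} shows that the energy integrals in \eqref{eq: I_RWN_form} coincide for $\psi^{u_j}$ and $\psi$, so the bounded case produces bounded rays $t\to\check{\psi^{u_j}}_t$ whose Monge--Amp\`ere slopes all equal the (fixed) RHS of \eqref{eq: I_RWN_form}, call it $s$. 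These bounded rays are the $d_1$-parallel rays to the psh ray $t\to\check\psi_t$ emanating from $u_j$, as in Remark \ref{rem: ray_max}. Using $I(u_j)\searrow I(u)$ together with the fact that $I$ is affine along $L^1$ geodesics, the formula $I(\check{\psi^{u_j}}_t)=I(u_j)+st$ passes to the limit via monotone convergence of $I$ on decreasing sequences in $\mathcal E^1_\omega$ \cite{BBGZ}, yielding $\check\psi_t\in\mathcal E^1_\omega$ with $I(\check\psi_t)=I(u)+st$, hence slope $s$. The converse direction---that an $L^1$ geodesic ray yields a finite-energy maximal test curve---proceeds symmetrically via the ray-side maximization of Remark \ref{rem: ray_max} and its slope-invariance \eqref{maxim_I_slope}.

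The main obstacle is the compatibility between the two maximization procedures (on the test-curve side via $P[\psi_\tau](u_j)$ and on the ray side via $\phi^{u_j}_t$), which is what guarantees that the approximating rays $\check{\psi^{u_j}}_t$ indeed agree with the parallel rays from Remark \ref{rem: ray_max}, so that the invariance \eqref{maxim_I_slope} can be deployed. Once this duality is verified, the monotone convergence of $I$ on $\mathcal E^1_\omega$ does all the work, since the RHS of \eqref{eq: I_RWN_form} is already $j$-independent thanks to the mass identity.
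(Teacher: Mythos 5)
Your overall strategy matches the paper's: approximate the base point $u$ by bounded potentials $u_j$, maximize the test curve at $u_j$ via $P[\psi_\tau](u_j)$, use the mass invariance $\int_X\omega^n_{P[\psi_\tau](u_j)}=\int_X\omega^n_{\psi_\tau}$ of \cite{DDL2} to keep the right-hand side of \eqref{eq: I_RWN_form} independent of $j$, and pass to the decreasing limit $\check\psi^{u_j}_t\searrow\check\psi_t$. The paper uses smooth approximants $u_j\searrow u$ from \cite{BK07} instead of truncations, and closes the limit with a $d_1$-Cauchy estimate plus \cite[Proposition 4.3]{BDL1} rather than monotone convergence of $I$ along decreasing sequences; these are interchangeable.

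The step that does not close as written is your base case. The curves $\tau\mapsto P[\psi_\tau](u_j)$ are maximal finite-energy test curves with \emph{bounded base point} $u_j$, but they are not \emph{bounded test curves} in the paper's sense: that definition additionally requires $\psi_\tau=\psi_{-\infty}$ for all $\tau$ below some finite $\tau'$, which $P[\psi_\tau](u_j)$ satisfies only if $\int_X\omega^n_{\psi_\tau}=V$ for $\tau$ sufficiently negative. Consequently the rays $\check{\psi^{u_j}}_t$ need not be bounded for $t>0$, and your layer-cake computation --- justified for bounded rays, where $\frac{d}{dt}I(\check\psi_t)=\frac{1}{V}\int_X\dot{\check\psi}_t\,\omega^n_{\check\psi_t}$ and the mass identity are unproblematic --- does not directly apply to the approximants. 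What is actually needed as a base case is the slope formula for general finite-energy maximal test curves with smooth (or bounded) base point; this is precisely \cite[Theorem 3.7(iv)]{DX22}, which the paper cites rather than reproves, and your sketch is essentially an outline of its proof. A smaller remark: the compatibility of the two maximization procedures, which you single out as the main obstacle, is not needed for the forward direction --- the convergence $\check{\psi^{u_j}}_t\searrow\check\psi_t$ follows directly from $P[\psi_\tau](u_j)\searrow\psi_\tau$ and elementary properties of the Legendre transform, which is how the paper proceeds; the identification with parallel rays is only recorded afterwards as a consequence (Remark \ref{rem: ray_max}).
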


This result is a slight generalization of \cite[Theorem 3.7(iv)]{DX22} that considers $u=0$ (c.f. \cite[Theorem 2.6]{DXZ23}) -- in fact, our argument is by reduction to the case $u=0$.

\begin{proof}Let $u_j \in \mathcal H_\omega$ such that $u_j \searrow u$ \cite{BK07}. We consider the maximal test curves  $\tau \to \psi^{u_j}_\tau$ constructed above. Since $\psi^{u_j}_\tau \searrow \psi_\tau$ for any $\tau \in \Bbb R$, by basic properties of the Legendre transforms we obtain that
$$\check \psi^{u_j}_t \searrow \check \psi_t, \ t \geq 0.$$
By \cite[Theorem 3.7(iv)]{DX22} the curves $t \to \check \psi^{u_j}_t$ are geodesic rays. Due to \eqref{eq: fetestcurve_inv} we have that 
        \begin{equation}\label{eq: I_RWN_form_2}
            I \{\check \psi^{u_j}_t\}
=\frac{1}{V}\int_{-\infty}^{\tau^+_\psi} \left(\int_X \omega_{\psi_\tau}^n-\int_X \omega_{}^n \right) \,\mathrm{d}\tau+\tau_{\psi}^+\,.
        \end{equation}
Consequently, for fixed $t>0$ and indices $j>k$ we have that $I(\psi^{u_j}_t) - I(\psi^{u_j}_0) =I(\psi^{u_j}_t) - I(u_j)= I(\psi^{u_k}_t) - I(\psi^{u_k}_0)=I(\psi^{u_k}_t) - I(u_k)$. As $\psi^{u_j}_t \leq \psi^{u_j}_t$ we have $d_1(u_k,u_j)=d_1(\psi^{u_k}_t,\psi^{u_j}_t) \to 0$ as $k,j \to \infty$. As a result, due to \cite[Proposition 4.3]{BDL1}, $t \to \lim_j \psi^{u_j}_t = \psi_t $ is a $d_1$-geodesic ray, as expected. Moreover taking the limit in \eqref{eq: I_RWN_form_2} the identity \eqref{eq: I_RWN_form} follows.

That $\{\phi_t\}_t \mapsto \{\hat \phi_\tau\}_\tau$ is the inverse map follows from Theorem \ref{thm: max_test_curve_ray_duality}.
\end{proof}

We conclude with a remark on the Monge--Amp\`ere slope of subgeodesic rays (c.f. \cite{DZ24}):
\begin{coro}\label{cor: maximiz_I_slope_for}  Let $t \to \phi_t$ be a sublinear $L^1$ subgeodesic ray. Then
$$I\{\phi_t\} = \frac{1}{V}\int_{-\infty}^{\tau^+_\phi} \left(\int_X \omega_{\hat \phi_\tau}^n-\int_X \omega_{}^n \right) \,\mathrm{d}\tau+\tau_{\phi}^+\,.$$
\end{coro}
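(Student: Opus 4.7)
The plan is to reduce to Theorem \ref{thm: max_fin_en_test_curve_ray_duality} via the maximization procedure of Remark \ref{rem: ray_max}: I apply the maximization with $v = \phi_0$ to produce the $L^1$ geodesic ray $t \to \phi^{\phi_0}_t$ emanating from $\phi_0$, which by \eqref{maxim_I_slope} satisfies $I\{\phi_t\} = I\{\phi^{\phi_0}_t\}$. Applying Theorem \ref{thm: max_fin_en_test_curve_ray_duality} to $\phi^{\phi_0}$ then yields
$$
I\{\phi^{\phi_0}_t\}=\frac{1}{V}\int_{-\infty}^{\tau^+_{\widehat{\phi^{\phi_0}}}} \left(\int_X \omega_{\widehat{\phi^{\phi_0}}_\tau}^n-\int_X \omega^n\right)d\tau+\tau^+_{\widehat{\phi^{\phi_0}}}.
$$
The target formula then follows once I identify $\widehat{\phi^{\phi_0}}_\tau$ with the maximization $P[\hat \phi_\tau](\phi_0)$ of the Legendre transform at $\phi_0$: in that case \cite[Theorem 1.3]{DDL2} gives $\int_X \omega_{\widehat{\phi^{\phi_0}}_\tau}^n = \int_X \omega_{\hat \phi_\tau}^n$ for each $\tau$, and the invariance of $\tau^+$ under maximization (noted after \eqref{eq: maxim_def}) gives $\tau^+_{\widehat{\phi^{\phi_0}}} = \tau^+_\phi$.

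To establish the identification, I show both curves are minimal elements of the set $\mathcal{M}$ of maximal test curves $\tilde \psi$ with $\tilde \psi_{-\infty} = \phi_0$ and $\tilde \psi_\tau \geq \hat \phi_\tau$. The curve $P[\hat \phi_\tau](\phi_0)$ lies in $\mathcal{M}$ by the construction surrounding \eqref{eq: maxim_def}, and for any other $\tilde \psi \in \mathcal{M}$, maximality gives $\tilde \psi_\tau = P[\tilde \psi_\tau](\phi_0)$, so monotonicity of $P[\cdot](\phi_0)$ applied to $\hat \phi_\tau \leq \tilde \psi_\tau$ yields $P[\hat \phi_\tau](\phi_0) \leq \tilde \psi_\tau$. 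On the other hand, by Theorem \ref{thm: max_test_curve_ray_duality} the test curve $\widehat{\phi^{\phi_0}}$ is maximal with $\widehat{\phi^{\phi_0}}_{-\infty} = \phi_0$, and $\phi^{\phi_0} \geq \phi$ yields $\widehat{\phi^{\phi_0}} \geq \hat \phi$, so $\widehat{\phi^{\phi_0}} \in \mathcal{M}$. For its minimality, I take a $\tilde \psi \in \mathcal{M}$; its inverse Legendre $\check{\tilde \psi}$ is by Theorem \ref{thm: max_fin_en_test_curve_ray_duality} an $L^1$ geodesic ray from $\phi_0$ dominating $\check{\hat \phi} = \phi$. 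A short argument using the sup characterization \eqref{eq: vabt_eq} shows $\phi^{\phi_0}$ is itself the smallest such dominating ray: given a competitor $\tilde \phi$ with $\tilde \phi_l \geq \phi_l$, $\tilde \phi$ restricted to $[0,l]$ is the geodesic segment from $\phi_0$ to $\tilde \phi_l$, which dominates $\phi^l$, so $\tilde \phi_t \geq \phi^l_t$ on $[0,l]$; letting $l \to \infty$ gives $\tilde \phi \geq \phi^{\phi_0}$. Hence $\check{\tilde \psi} \geq \phi^{\phi_0}$, and taking the Legendre transform yields $\tilde \psi \geq \widehat{\phi^{\phi_0}}$.

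The main obstacle is this identification $\widehat{\phi^{\phi_0}}_\tau = P[\hat \phi_\tau](\phi_0)$, which is essentially the duality between the two maximization procedures alluded to in Remark \ref{rem: ray_max}. Once this is in place, everything else is routine substitution into the formula provided by Theorem \ref{thm: max_fin_en_test_curve_ray_duality}.
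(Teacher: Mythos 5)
Your proof is correct and follows essentially the same route as the paper: maximize the ray at $\phi_0$, invoke \eqref{maxim_I_slope} and Theorem \ref{thm: max_fin_en_test_curve_ray_duality}, and then transfer the resulting formula back to $\hat\phi_\tau$ using the invariance of the masses and of $\tau^+$ under maximization. The only difference is that you spell out a (correct) minimality argument for the identification $\widehat{\phi^{\phi_0}}_\tau = P[\hat\phi_\tau](\phi_0)$, which the paper leaves implicit as the duality of the two maximization procedures asserted in Remark \ref{rem: ray_max}.
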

\begin{proof}
 Let $t \to \phi_t$ be a sublinear $L^1$ subgeodesic ray and $t \to \psi_t$ the maximization of $t \to \phi_t$ with respect to $\phi_0$. By Theorem \ref{thm: max_fin_en_test_curve_ray_duality} and Remark \ref{rem: ray_max}, we have that
$$I\{\phi_t\} =I\{\psi_t\} = \frac{1}{V}\int_{-\infty}^{\tau^+_\psi} \left(\int_X \omega_{\hat \psi_\tau}^n-\int_X \omega_{}^n \right) \,\mathrm{d}\tau+\tau_{\psi}^+\,.$$
The result now follows from \eqref{eq: fetestcurve_inv} and the fact that $\tau^+_\psi = \tau^+_\phi$.
\end{proof}

\section{Lines and test lines}

We say that $\Bbb R \ni t \to u_t \in \textup{PSH}_\omega$ is a \emph{subgeodesic line}  if and only if $u(x,z) := u_{\textup{Re }z}(x) \in \PSH_{\pi^*\omega}(X\times \Bbb C)$. The subgeodesic line is $L^p$ (resp. \emph{bounded}) if $u_t \in \mathcal E^p_\omega$ (resp. $u_t \in L^\infty$) for all $t \in \Bbb R.$

Recall from the introduction that a curve $\Bbb R \ni \tau \to v_\tau \in \PSH_\omega$ is a \emph{test line} if for any fixed $x \in X$, $\Bbb R \ni \tau \to v_\tau(x) \in [-\infty,\infty)$ is usc and concave, and there exists $C>0$ such that $v_\tau \equiv -\infty$ for any $|\tau| \geq  C$. 
For a test line $\tau \to v_\tau$ we introduce the maximal (open) interval $(\tau_v^-,\tau_v^+)$ where the potentials $v_\tau$ are not identically equal to $-\infty$:
\begin{equation}\label{eq: tau_line}
\tau^-_u:= \inf\{\tau \in \Bbb R\ |\  v_\tau \not \equiv -\infty\}, \ \ \ \tau^+_u:= \sup\{\tau \in \Bbb R\ |\ v_\tau \not \equiv -\infty\}.
\end{equation}

Due to $\tau$-concavity and the $\tau$-usc property we have that $v_\tau \not \equiv -\infty$ for all $\tau \in (\tau^-_v,\tau^+_v)$. 
Before we can consider the Legendre transforms of test lines, we need two technical lemmas:

\begin{lemma}\label{lem: line_test_curve_sup} Let $\tau \to v_\tau$ be a test line. For any $t \in \Bbb R$, the potential $\textup{usc}_X\big(\sup_{\tau \in \Bbb R} v_\tau +t\tau\big) \in \PSH_\omega$ exists and is well defined. 
\end{lemma}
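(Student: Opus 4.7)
The plan is to reduce the claim to showing that the family $\{v_\tau + t\tau\}_{\tau\in\mathbb{R}}$ of $\omega$-psh functions is uniformly bounded above on $X$; once this is in hand, the usc regularization of its pointwise supremum lies in $\PSH_\omega$ by standard pluripotential theory.

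First, I would establish pointwise finiteness of the supremum. For each $x \in X$, the function $\tau \mapsto v_\tau(x) + t\tau$ is concave and upper semicontinuous with values in $[-\infty,+\infty)$, and it is identically $-\infty$ outside the bounded interval $[-C, C]$. Such a function cannot attain the value $+\infty$, so $\check v_t(x) := \sup_\tau(v_\tau(x) + t\tau) < +\infty$ for every $x$.

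The crucial step is to extract a uniform (in $\tau$) upper bound on $\sup_X v_\tau$. For this, I would introduce
\[
F(\tau) \;:=\; \int_X v_\tau\,\omega^n \;\in\; [-\infty,+\infty),
\]
and observe two things: (a) integrating the pointwise concavity $v_{\alpha\tau_1+(1-\alpha)\tau_2}(x) \geq \alpha v_{\tau_1}(x)+(1-\alpha) v_{\tau_2}(x)$ against the positive measure $\omega^n$ shows that $F$ is concave on $\mathbb{R}$; and (b) $F \equiv -\infty$ outside $[-C,C]$, while $F(\tau_0) > -\infty$ for every $\tau_0 \in (\tau_v^-,\tau_v^+)$ since $v_{\tau_0} \in \PSH_\omega$ is not identically $-\infty$ and hence belongs to $L^1(\omega^n)$. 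A concave function on $\mathbb{R}$ supported in a bounded interval and finite at an interior point of its effective domain is automatically bounded above (bound it by a subgradient tangent line at an interior point with finite left and right derivatives); hence $S := \sup_\tau F(\tau) < +\infty$. The classical sup-vs-integral estimate
\[
\sup_X u \;\leq\; \tfrac{1}{V}\int_X u\,\omega^n + C_0, \qquad u \in \PSH_\omega,
\]
(a consequence of $L^1$-compactness of $\{u \in \PSH_\omega : \sup_X u = 0\}$) then yields $\sup_X v_\tau \leq S/V + C_0 =: M$, uniformly in $\tau$. Consequently $v_\tau + t\tau \leq M + |t|C$ pointwise on $X$ for every $\tau \in \mathbb{R}$.

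The conclusion is now routine: by Choquet's lemma there is a countable subfamily of $\{v_\tau + t\tau\}_\tau$ whose pointwise supremum has the same usc regularization as the full family's, and this regularization is $\omega$-psh as the usc envelope of a countable, uniformly-bounded-above supremum of $\omega$-psh functions. I expect the main substantive point to be the uniform bound $\sup_X v_\tau \leq M$, which follows cleanly from the concavity of $F$ combined with the standard $\omega$-psh compactness estimate; everything else is formal.
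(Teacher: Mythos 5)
Your proof is correct and follows essentially the same route as the paper: both arguments reduce the matter to a uniform upper bound on $\sup_X v_\tau$, obtain it from the concavity of $\tau \mapsto \int_X v_\tau\,\omega^n$ on the bounded interval $(\tau_v^-,\tau_v^+)$, and then pass from the $L^1$-bound to a sup-bound via the standard compactness estimate for $\omega$-psh functions (which the paper invokes as the Hartogs lemma). Your write-up just makes the supporting-line argument and the $\sup$-vs-$\int$ inequality explicit where the paper cites them.
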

\begin{proof} Since $v_\tau \equiv -\infty$ for $\tau \not \in [\tau^-_v,\tau^-_v]$, it is enough to show the result for $t=0$. The only concern is that the supremum of the $v_\tau$ might not be finite. For this it is enough to show existence of $c>0$ such that $v_\tau \leq c$ for any $\tau \in [\tau^-_v,\tau^-_v]$. 

From $\tau$-concavity we obtain that $\tau \to \int_X v_\tau \omega^n$ is concave (hence bounded from above) on the finite length interval $(\tau_v^-,\tau_v^+)$. The Hartogs lemma now gives that  $\sup_X v_\tau$ is also bounded  from above on $[\tau_v^-,\tau_v^+]$, finishing the proof.
\end{proof}

\begin{lemma}\label{lem: line_test_curve_sup1}  Let $\tau \to v_\tau$ be a test line.  The function $(t, x) \to \sup_{\tau \in \Bbb R}( v_\tau +t\tau)$ is usc on $\Bbb R \times X$.
\end{lemma}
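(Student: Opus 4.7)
The plan is to establish upper semicontinuity of $V(t,x):=\sup_{\tau\in\mathbb{R}}(v_\tau(x)+t\tau)$ by a sequential argument, reducing the key step to Hartogs' lemma applied to the $\omega$-psh family $\{v_\tau\}$. By Lemma~\ref{lem: line_test_curve_sup}, $v_\tau$ is uniformly bounded above by some constant $c$, and since $v_\tau\equiv -\infty$ for $|\tau|\geq C$, we have $V(t,x)=\sup_{\tau\in[\tau_v^-,\tau_v^+]}(v_\tau(x)+t\tau)$ with the supremum attained (whenever $V(t,x)>-\infty$) by combining compactness of $[\tau_v^-,\tau_v^+]$ with the concavity and $\tau$-usc of $\tau\mapsto v_\tau(x)$. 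A standard Legendre-transform estimate also shows that $V(\cdot,x)$ is convex and Lipschitz in $t$ with constant $\max(|\tau_v^-|,|\tau_v^+|)$, uniformly in $x$.

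Given $(t_n,x_n)\to(t_0,x_0)$ with $V(t_n,x_n)\to L$, the case $L=-\infty$ trivially yields $L\leq V(t_0,x_0)$; otherwise I would choose maximizers $\tau_n\in[\tau_v^-,\tau_v^+]$ so that $V(t_n,x_n)=v_{\tau_n}(x_n)+t_n\tau_n$, and extract a subsequence with $\tau_n\to\tau_\infty$. Since $t_n\tau_n\to t_0\tau_\infty$, the proof reduces to establishing the key inequality $\limsup_n v_{\tau_n}(x_n)\leq v_{\tau_\infty}(x_0)$, after which one concludes $L\leq v_{\tau_\infty}(x_0)+t_0\tau_\infty\leq V(t_0,x_0)$.

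When $\tau_\infty\in(\tau_v^-,\tau_v^+)$ lies in the interior, the key inequality I would prove via $L^1$-convergence of $v_{\tau_n}$ to $v_{\tau_\infty}$ followed by Hartogs' lemma. Pointwise convergence $v_{\tau_n}(x)\to v_{\tau_\infty}(x)$ holds for each $x$ because concave usc functions on $\mathbb{R}$ are continuous on the interior of their domain of finiteness. The uniform upper bound combined with a local lower envelope from concavity --- for $\delta>0$ small enough that $[\tau_\infty-\delta,\tau_\infty+\delta]\subset(\tau_v^-,\tau_v^+)$, concavity gives $v_\tau\geq\min(v_{\tau_\infty-\delta},v_{\tau_\infty+\delta})$ on this interval --- supplies an $L^1(X)$-dominating function, so that dominated convergence upgrades pointwise to $L^1$-convergence, and Hartogs' lemma delivers $\limsup_n v_{\tau_n}(x_n)\leq v_{\tau_\infty}(x_0)$.

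The main obstacle will be the endpoint cases $\tau_\infty\in\{\tau_v^\pm\}$, where $v_{\tau_\infty}$ may be identically $-\infty$ even though $v_{\tau_n}(x_n)$ stays bounded. Here I envision an approximation argument: for $\tau_n\to\tau_v^+$, the concave function $\tau\mapsto v_\tau(x_n)+t_n\tau$ attains its supremum at $\tau_n$, so by replacing $\tau_n$ with a slight interior shift $\tau_n'\in[\tau_v^-,\tau_v^+-\varepsilon]$ one incurs only a controlled loss that vanishes as $\varepsilon\to 0$, reducing to the interior case already handled. Alternatively, one could work globally with $W(s,x):=V(\textup{Re}(s),x)$ on $\mathbb{C}\times X$: since each summand $(s,x)\mapsto v_\tau(x)+\tau\textup{Re}(s)$ is $\pi^*\omega$-psh, the usc regularization $W^*$ is $\pi^*\omega$-psh; exploiting that $W$ and $W^*$ are both translation-invariant in $\textup{Im}(s)$, one shows via a complex-line analysis that $\{W<W^*\}$ cannot have positive measure in the $\textup{Im}(s)$-direction, forcing $W=W^*$. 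Either route requires careful handling of the potentially pluripolar locus where $v_\tau$ degenerates.
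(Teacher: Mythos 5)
Your main route---fixing a convergent sequence of maximizers $\tau_n\to\tau_\infty$ and reducing to $\limsup_n v_{\tau_n}(x_n)\leq v_{\tau_\infty}(x_0)$ via $L^1$-convergence plus Hartogs---is genuinely different from the paper's proof, which complexifies the inverse Legendre transform, observes that the negligible set $\{u<\textup{usc}\,u\}$ is $i\mathbb{R}$-invariant, kills almost every $\textup{Re}\,s$-slice using finiteness and convexity in $t$, and then removes the remaining null set of $x$'s by Legendre duality (two $\omega$-psh functions agreeing a.e.\ agree everywhere). For $\tau_\infty\in(\tau_v^-,\tau_v^+)$ your argument is sound: the concavity sandwich $\min(v_{\tau_\infty-\delta},v_{\tau_\infty+\delta})\leq v_{\tau_n}\leq c$ supplies the $L^1$ dominator, pointwise convergence holds off the Lebesgue-null set $\{v_{\tau_\infty-\delta}=-\infty\}\cup\{v_{\tau_\infty+\delta}=-\infty\}$, and the Hartogs conclusion $\limsup_n v_{\tau_n}(x_n)\leq v_{\tau_\infty}(x_0)$ is valid at \emph{every} $x_0$ because $v_{\tau_\infty}$ is usc.

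The gap is the endpoint case, and the fix you propose fails. The loss $g_n(\tau_n)-\sup_{\tau\leq\tau_v^+-\varepsilon}g_n(\tau)$ for $g_n(\tau):=v_\tau(x_n)+t_n\tau$ is controlled by concavity only in terms of $g_n(\tau_n)-g_n(\sigma)$ for a fixed $\sigma<\tau_v^+-\varepsilon$, and $g_n(\sigma)=v_\sigma(x_n)+t_n\sigma$ need not be bounded below uniformly in $n$. Concretely, take $\tau_v^-=0$, $\tau_v^+=1$, $v_\tau=(1-\tau)\varphi_0$ on $[0,1]$ with $\varphi_0=2\log|z|$ near $x_0=0$, and $x_n\to x_0$: then $\tau_n=1$ and $g_n(1)=t_n$ stays bounded, while $g_n(1-\varepsilon)=2\varepsilon\log|x_n|+t_n(1-\varepsilon)\to-\infty$ for every fixed $\varepsilon>0$, so the ``controlled loss'' blows up. The endpoint case can nonetheless be rescued inside your own framework without shifting: by $\tau$-usc and $\tau$-concavity, $v_{\tau_n}\to v_{\tau_v^+}$ pointwise off a pluripolar set; if $v_{\tau_v^+}\equiv-\infty$ then $\sup_X v_{\tau_n}\to-\infty$ by the compactness theorem for $\omega$-psh functions and the claim is trivial, while if $v_{\tau_v^+}\in\PSH_\omega$ the one-sided sandwich $\min(v_\sigma,v_{\tau_v^+})\leq v_{\tau_n}\leq c$ gives $L^1$-convergence to $v_{\tau_v^+}$ and Hartogs applies exactly as in the interior case. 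Finally, be aware that your ``alternative'' sketch names the wrong mechanism: the set $\{W<W^*\}$ being null in the $\textup{Im}\,s$-direction is automatic and does not force $W=W^*$ (consider $\{z_2=0\}\subset\mathbb{C}^2$, which is pluripolar and invariant under all translations in $z_1$); what actually closes the paper's argument is nullity of almost every $\textup{Re}\,s$-slice combined with convexity in $t$, plus the a.e.-implies-everywhere step for the $\omega$-psh Legendre transforms.
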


The proof is almost the same as \cite[Lemma 3.8]{DX22}:

\begin{proof}One needs to show that on $\Bbb R \times X$ we have that 

$$\textup{usc}_{\Bbb R \times X}\big(\sup_{\tau \in \Bbb R} (v_\tau +t\tau)\big) = \sup_{\tau \in \Bbb R} (v_\tau +t\tau)$$

To start we consider the complexification of the inverse Legendre transform: 
\[
u(s,z) := \sup_{\tau} (v_\tau(z) +  \tau\textup{Re} s)\,,\quad (s,z) \in \Bbb C \times X\,.
\]
By definition, $u_t(x): = u(t,x) \leq u_0(x) +  |t|\max\{\tau^+_v, \tau_v^-\}, t  \in \Bbb R$. Clearly, $\textup{usc  } u \in \PSH_{\pi^*\omega}(\Bbb C\times X)$, where  $\pi:\Bbb C\times X\rightarrow X$ is the natural projection and $\textup{usc} = \textup{usc}_{\mathbb{C}\times X}$.

It will be enough to show that  $\textup{usc } u=u$. For this we introduce $E = \{ u < \textup{usc } u\} \subseteq \Bbb C \times X$. As both $u$ and $\textup{usc } u$ are $\mathbb{R}$-invariant in the imaginary direction of $\Bbb C$, it follows that $E$ is also $i \mathbb{R}$-invariant, i.e., there exists $B \subseteq \Bbb R \times X$ such that $E = B +i \mathbb R$.

As $E$ has Monge--Ampère capacity zero, it follows that $E$ has zero Lebesgue  measure. By Fubini's theorem $B \subseteq \Bbb R \times X$ has zero  Lebesgue measure as well. For $z \in X$, we introduce the slices:
\[
B_z = B \cap \left(\Bbb R \times \{z\}\right)\,.
\]
By Fubini's theorem again, we have that $B_z$ has zero Lebesgue measure for all $z \in X \setminus F$, where $F \subseteq X$ is some set of zero Lebesgue measure. 

By slightly increasing $F$, but not its zero Lebesgue measure(!), we can additionally assume that $u_t(z)> -\infty$ for all $t \in \Bbb R$ and $z \in X \setminus F$. Indeed, at least one potential $v_\tau$ is not identically equal to $-\infty$, so we can add the zero Lebesgue measure set $\{x \in X :   v_\tau(x) = -\infty\}$ to $F$. 

Let $z \in X \setminus F$. We argue that $B_z$ is in fact empty. By the  assumptions on $F$, the maps $t \mapsto u_t(z)$ and $t \mapsto (\textup{usc } u)(t,z)$ are locally bounded and convex (hence continuous) on $\Bbb R$. As they agree on the dense set $\Bbb R \setminus B_z$, it follows that they have to be the same, hence $B_z=\emptyset$, allowing to conclude that 
\begin{equation}\label{eq: a.e._id1}
\chi_\tau := \inf_{ t \in \Bbb R} [u_t(x) - \tau t] = \inf_{ t \in \Bbb R} \left[{(\textup{usc } u)}(t,x) - \tau t\right]\,, \quad \tau \in \mathbb{R}  \textup{ and } x \in X \setminus F\,.
\end{equation}
By duality of the Legendre transform $v_\tau(x) = \inf_{t \in \Bbb R} [u_t(x) - t\tau]$ for all  $x \in X$ and $\tau \in \mathbb R$ (here is where the $\tau$-usc property of $\tau \mapsto v_\tau$ is used). Using this  and \eqref{eq: a.e._id1} we obtain that $v_\tau=\chi_\tau$ on $X \setminus F$ for all $\tau \in \mathbb{R}$, hence $v_\tau=\chi_\tau$ a.e. on $X$. Since both $v_\tau$ and $\chi_\tau$ are $\omega$-psh (the former by definition, the latter by Kiselman's minimum principle \cite{Ki}), it follows that in fact $v_\tau \equiv\chi_\tau$ for all $\tau \in \mathbb{R}$, i.e., \eqref{eq: a.e._id1} holds for all $\tau \in \Bbb R, \ x \in X$. 

Let $x \in X$. Due to $t$-convexity,  $t \to u_t(x)$ is either identically equal to $-\infty$, or finite and convex on $\Bbb R$. The same is true for $t \to \textup{usc } u(t,x)$. Thus, applying the $\tau$-Legendre transform to the $\tau$-usc and $\tau$-concave functions $\tau \to v_\tau(x)$ and $\tau \to \chi_\tau(x)$, we conclude that $u_t(x) = \textup{usc } u(t,x)$ for all $(t,x)\in \Bbb R \times X$.
\end{proof}

\paragraph{Sublinear subgeodesic lines and test lines.} We recall the Legendre transform of a subgeodesic line $t \to u_t$ and the inverse Legendre transform of a test line $\tau \to v_\tau$, that will give us correspondences between various types of (sub)geodesic lines and test lines:
$$\hat u_\tau := \inf_{t \in \Bbb R} (u_t -t\tau), \ \ \ \check v_t := \sup_{\tau \in \Bbb R} (v_\tau + t\tau).$$

We say that a subgeodesic line $t \to u_t$ is \emph{sublinear} if there exists $c>0$ such that $u_t(x) \leq c + c|t|.$ With this notion in hand, we are ready to prove the simplest form of our Ross--Witt Nystr\"om correspondence:

\begin{prop}\label{prop: RWN_sublinear_subgeod_lines} The maps $\{u_t\}_t \to \{\hat u_\tau\}_\tau$ and $\{v_\tau\}_\tau \to \{\check v_t\}_t$ give bijective correspondences between sublinear subgeodesic lines and test lines.
\end{prop}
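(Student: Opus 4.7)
The plan is to verify that each assignment produces the correct type of object, and then appeal to one-dimensional Fenchel--Moreau duality pointwise in $x$ to conclude bijectivity.

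For the forward direction, let $t \mapsto u_t$ be a sublinear subgeodesic line with bound $u_t(x) \leq c+c|t|$. For each fixed $\tau \in \mathbb{R}$, the function $(x,s)\mapsto u_{\mathrm{Re}\,s}(x) - \tau\,\mathrm{Re}\,s$ lies in $\textup{PSH}_{\pi^*\omega}(X\times\mathbb{C})$ and is $\mathrm{Im}\,s$-invariant, so Kiselman's minimum principle (applied, if one prefers, on an exhaustion by bounded strips) places $\hat u_\tau$ in $\textup{PSH}_\omega \cup \{-\infty\}$. Pointwise in $x$, $\tau \mapsto \hat u_\tau(x)$ is an infimum of affine functions, hence concave and usc. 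Sublinearity forces $u_t(x) - t\tau \to -\infty$ as $t\to +\infty$ whenever $\tau > c$, and analogously as $t\to -\infty$ whenever $\tau < -c$; so $\hat u_\tau \equiv -\infty$ for $|\tau| > c$, exhibiting $\tau \mapsto \hat u_\tau$ as a test line.

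For the reverse direction, let $\tau\mapsto v_\tau$ be a test line. The proof of Lemma \ref{lem: line_test_curve_sup1} already established that the complexification $U(s,z) := \sup_\tau(v_\tau(z) + \tau\,\mathrm{Re}\,s)$ equals its upper semicontinuous regularization on $\mathbb{C}\times X$ and belongs to $\textup{PSH}_{\pi^*\omega}(\mathbb{C}\times X)$; restricting to $\mathrm{Im}\,s = 0$ shows that $t\mapsto \check v_t$ is a subgeodesic line. Sublinearity follows from Lemma \ref{lem: line_test_curve_sup}, which supplies $M$ with $v_\tau \leq M$ on $[\tau_v^-,\tau_v^+]$: combining with the $\tau$-support condition yields $\check v_t(x) \leq M + |t|\,\max\{|\tau_v^-|,|\tau_v^+|\}$.

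Bijectivity then reduces to the two pointwise identities $\check{\hat u}_t(x) = u_t(x)$ and $\hat{\check v}_\tau(x) = v_\tau(x)$. The main subtlety, which I expect to be the delicate step, concerns the first identity: for fixed $x$, the slice $s\mapsto u_{\mathrm{Re}\,s}(x)$ is subharmonic on $\mathbb{C}$ and depends only on $\mathrm{Re}\,s$, and applying sub-mean-value on small disks centered on the boundary of its effective $\mathrm{Re}$-domain rules out a proper effective domain. Hence for each $x$ the function $t\mapsto u_t(x)$ is either identically $-\infty$ or finite convex on all of $\mathbb{R}$, and in particular continuous in $t$; classical one-dimensional Fenchel--Moreau then gives $\check{\hat u}_t(x) = u_t(x)$ for all $(t,x)$. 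For the second identity, $\tau \mapsto v_\tau(x)$ is concave, usc, and compactly supported in $\tau$, so the same duality yields $\hat{\check v}_\tau(x) = v_\tau(x)$; this is precisely the pointwise identity extracted within the proof of Lemma \ref{lem: line_test_curve_sup1}. Together these identities exhibit $\{u_t\}\mapsto\{\hat u_\tau\}$ and $\{v_\tau\}\mapsto\{\check v_t\}$ as mutual inverses.
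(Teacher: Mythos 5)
Your argument is correct and follows essentially the same route as the paper: the forward direction via Kiselman's minimum principle plus the sublinearity bound, the reverse direction via Lemmas \ref{lem: line_test_curve_sup} and \ref{lem: line_test_curve_sup1} together with Choquet's lemma, and bijectivity via pointwise one-variable Fenchel--Moreau duality, using that each slice $t \mapsto u_t(x)$ is either identically $-\infty$ or finite convex on all of $\mathbb{R}$ (a dichotomy the paper records in the introduction). The only cosmetic point is that the sub-mean-value argument ruling out a proper effective domain should be run at interior points accumulating at the boundary, not at boundary points themselves, since at a boundary point the inequality $-\infty \leq -\infty$ yields no contradiction.
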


\begin{proof} If $t \to u_t$ is a sublinear subgeodesic line, then $\hat u_\tau \equiv -\infty$ for $|\tau| > c$. As $\tau$-concavity and $\tau$-upper semi-continuity follow from basic facts about Legendre transforms, we obtain that $\tau \to \hat u_\tau$ is a test line.

Similarly, if $\tau \to v_\tau$ is a test line, then using the previous two lemmas and Choquet's lemma we obtain that $t \to \check v_t$ is a sublinear subgeodesic line.
\end{proof}

Any subgeodesic line $t \to u_t$ is the concatenation of two subgeodesic rays:
\begin{equation}\label{eqn: conc_rays}
u^+_t := u_t, \ t \geq 0, \ \ \ \ u^-_t := u_{-t}, \ t \geq 0.
\end{equation}
Write $\tau \to \hat{u}^+_\tau$ and $\tau \to \hat{u}^-_\tau$ for the test curves corresponding to these geodesic rays. It is possible to link these to the Legendre transform of the subgeodesic line:

\begin{lemma} Suppose that $t \to u_t$ is a subgeodesic line. Then for any $\tau \in \Bbb R$ we have that $\hat{u}^+_\tau = \sup_{\sigma\geq \tau} \hat{u}_\sigma$ and $\hat{u}^-_\tau = \sup_{\sigma \leq -\tau} \hat{u}_\sigma$.
\end{lemma}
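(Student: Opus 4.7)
The plan is a pointwise reduction to one-dimensional convex duality. Both identities are symmetric under $t\mapsto -t$: applying the first identity to the reflected subgeodesic line $\tilde u_t:=u_{-t}$ (which satisfies $\tilde u^+=u^-$ and $\hat{\tilde u}_\sigma=\hat u_{-\sigma}$) recovers the second after substituting $\mu=-\sigma$. So it suffices to prove $\hat u^+_\tau = \sup_{\sigma\geq\tau}\hat u_\sigma$.

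Fix $x\in X$ and set $f(t):=u_t(x)$. Since $u\in\PSH_{\pi^*\omega}(X\times\Bbb C)$, the function $f\colon\Bbb R\to[-\infty,\infty)$ is convex and upper semicontinuous; the degenerate case $f\equiv -\infty$ makes both sides $-\infty$, so I assume otherwise. The inequality $\sup_{\sigma\geq\tau}\hat u_\sigma(x)\leq \hat u^+_\tau(x)$ is immediate, since for any $\sigma\geq\tau$,
\[
\hat u_\sigma(x) = \inf_{t\in\Bbb R}(f(t)-t\sigma) \leq \inf_{t\geq 0}(f(t)-t\sigma) \leq \inf_{t\geq 0}(f(t)-t\tau) = \hat u^+_\tau(x),
\]
using $-t\sigma \leq -t\tau$ for $t\geq 0$ in the last step, followed by taking the supremum over $\sigma\geq\tau$.

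For the reverse inequality I view $\hat u^+_\tau(x)=\inf_{t\geq 0}(f(t)-t\tau)$ as a convex minimization problem over $\Bbb R$ with the linear inequality constraint $-t\leq 0$. Slater's condition is trivial (any $t>0$ in the effective domain of $f$ is strictly feasible), so strong Lagrangian duality applies. Introducing the nonnegative multiplier $\lambda\geq 0$ and substituting $\sigma=\tau+\lambda$, the dual value is
\[
\sup_{\lambda\geq 0}\inf_{t\in\Bbb R}\bigl(f(t)-t\tau-\lambda t\bigr) = \sup_{\sigma\geq\tau}\inf_{t\in\Bbb R}(f(t)-t\sigma) = \sup_{\sigma\geq\tau}\hat u_\sigma(x),
\]
and by strong duality this equals the primal value $\hat u^+_\tau(x)$, as desired. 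The only mild subtlety — the possibility that $f$ takes the value $-\infty$ at boundary points of its effective domain — does not affect either Legendre transform, so the standard one-dimensional convex-duality theorem suffices, and I do not expect a serious obstacle here.
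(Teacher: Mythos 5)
Your proof is correct, and it takes a genuinely different route from the paper's. The symmetry reduction, the easy inequality, and the identification of $\hat u^+_\tau(x)$ with the value of the convex program $\inf\{f(t)-t\tau : t\geq 0\}$ are all sound; since for a subgeodesic line $t\mapsto u_t(x)$ is either identically $-\infty$ or finite and convex on all of $\Bbb R$, the constraint qualification you invoke is automatic in the non-degenerate case, and strong duality for a single affine constraint yields exactly $\sup_{\sigma\geq\tau}\hat u_\sigma(x)$ as the dual value (this also covers the case where the primal value is $-\infty$, by weak duality). The paper instead takes $v_\tau:=\sup_{\sigma\geq\tau}\hat u_\sigma$ as a candidate, checks that it is a test curve (using Lemma \ref{lem: line_test_curve_sup1} to see that the pointwise supremum is $\omega$-psh), and computes its inverse Legendre transform by interchanging the two suprema to get $\check v_t=\sup_\sigma(\hat u_\sigma+t\sigma)=u_t$ for $t\geq 0$; the identity $v_\tau=\hat u^+_\tau$ then follows from the involutivity of the Legendre transform on the ray side. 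Your argument is more elementary and entirely pointwise: it bypasses the ray/test-curve duality machinery, and the plurisubharmonicity of the supremum comes for free since you show it coincides everywhere with the $\omega$-psh function $\hat u^+_\tau$. The paper's route is shorter given the duality statements already in place. One cosmetic remark: your closing caveat about $-\infty$ values at boundary points of the effective domain is vacuous here, precisely because $f$ is either finite on all of $\Bbb R$ or identically $-\infty$.
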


\begin{proof}We only argue the first formula as the second follows automatically after replacing $t \to u_t$ with $t \to u_{-t}$. 

Consider the curve:
\[
v_\tau := \sup_{\sigma\geq \tau} \hat{u}_\sigma \text{ for }\tau \in \Bbb R.\]
That $v_\tau \in \PSH_\omega$ follows from Lemma \ref{lem: line_test_curve_sup1} applied to the truncated test line $\sigma \to \hat u_\sigma $ where we replace potentials $\hat u_\sigma$ for $\sigma < \tau$ with $-\infty$.

That $\tau \to v_\tau$ is $\tau$-usc, $\tau$-concave, $\tau$-decreasing follows from basic properties of convex functions -- hence it is a test curve. To conclude the first formula of the lemma, it is enough to show that the dual Legendre transform of $\tau \to v_\tau$ is the ray $t \to u_t, \ t \geq 0$:
\begin{align*}
\check{v}_t &= \sup_{\tau\in \R} (v_\tau + t\tau) = \sup_{\tau\in \R} \sup_{\sigma \geq \tau} (\hat{u}_\sigma + t\tau)= \sup_{\sigma\in \R}\sup_{\tau \leq \sigma} (\hat{u}_\sigma + t\tau) \\
&= \sup_{\sigma\in \R} (\hat{u}_\sigma + t\sigma) = u_t,
\end{align*}
where we used that $t\geq 0$ in the last line. 
\end{proof}

It follows immediately that: 
\begin{lemma} \label{lem: tau_line_ray} For any test line $\tau\mapsto u_\tau$, we have  $\tau^+_{u^+} = \tau^+_{u}$ and $\tau^+_{u^-} = -\tau^-_{u}$. 
\end{lemma}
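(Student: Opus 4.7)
The plan is to deduce both identities as direct consequences of the two explicit formulas established in the preceding lemma, namely
$$u^+_\tau = \sup_{\sigma \geq \tau} u_\sigma \quad \textup{and} \quad u^-_\tau = \sup_{\sigma \leq -\tau} u_\sigma,$$
by tracking where the right-hand sides become identically $-\infty$ on $X$. (Here I interpret $u^+, u^-$ in the statement as the test curves arising from the two rays of the subgeodesic line $\check u$ via the formulas above.)

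For the first equality, I would observe that $u^+_\tau \equiv -\infty$ on $X$ precisely when $u_\sigma \equiv -\infty$ for every $\sigma \geq \tau$: indeed, if some $\sigma_0 \geq \tau$ has $u_{\sigma_0} \not\equiv -\infty$, then the supremum $u^+_\tau \geq u_{\sigma_0}$ also fails to be identically $-\infty$. Unwinding the definition \eqref{eq: tau_ray} of $\tau^+_{u^+}$ and the definition \eqref{eq: tau_line} of $\tau^+_{u}$, this gives
$$\tau^+_{u^+} = \inf\{\tau \in \R : u_\sigma \equiv -\infty \textup{ for all } \sigma \geq \tau\} = \sup\{\sigma \in \R : u_\sigma \not\equiv -\infty\} = \tau^+_{u},$$
the middle equality being a purely set-theoretic identity valid for any subset $T \subset \R$ (independent of boundary behaviour of $T$ at its supremum).

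For the second identity, I would apply the mirror argument to $u^-_\tau = \sup_{\sigma \leq -\tau} u_\sigma$: the condition $u^-_\tau \equiv -\infty$ translates to $u_\sigma \equiv -\infty$ for all $\sigma \leq -\tau$, and taking the infimum over such $\tau$ produces $-\inf\{\sigma \in \R : u_\sigma \not\equiv -\infty\} = -\tau^-_{u}$, as required. I expect no substantive obstacle; the entire proof is a direct unwinding of the definitions via the formulas of the preceding lemma, needing only a cosmetic check that infima and suprema match at the (possibly degenerate) boundary values $\sigma = \tau^\pm_{u}$, which is the content of the set-theoretic identity used above. This justifies the author's remark that the claim follows immediately.
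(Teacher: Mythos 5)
Your proposal is correct and is exactly the unwinding the paper intends: the authors state the lemma as an immediate consequence of the preceding formulas $\hat u^+_\tau = \sup_{\sigma\geq\tau}\hat u_\sigma$ and $\hat u^-_\tau = \sup_{\sigma\leq-\tau}\hat u_\sigma$, and give no further argument. Your observation that the supremum is identically $-\infty$ precisely when all the potentials in the relevant half-line are, together with the elementary identification of the resulting infimum with $\tau^+_u$ (resp. $-\tau^-_u$), is the same reasoning.
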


\paragraph{Geodesic turns and maximal test lines.} 
We say that an $L^p$ subgeodesic line $t \to u_t$ is an $L^p$ \emph{geodesic turn} (or simply an $L^p$ turn) if the subgeodesic rays $u^+_t$ and $u^-_t$ defined in \eqref{eqn: conc_rays} are actually geodesic rays.
We define \emph{bounded turns} similarly.

\begin{lemma} If $t \to u_t$ is an $L^p$ turn then it is sublinear. In particular, if $u_0$ is bounded, then $t \to u_t$ is a bounded turn.
\end{lemma}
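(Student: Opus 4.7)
My plan is to prove the two assertions in sequence; both are essentially bookkeeping exercises once we recall the ingredients already in place.

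For sublinearity, I just unwind the definitions. By hypothesis, the two rays $t\mapsto u^+_t = u_t$ and $t\mapsto u^-_t = u_{-t}$ (for $t\geq 0$) are $L^p$ geodesic rays, and by the definition of a psh geodesic ray recorded in Section~\ref{sec: prelim} each of them is itself sublinear as a subgeodesic ray. Hence there are constants $C^\pm > 0$ with $u^\pm_t(x) \leq C^\pm + C^\pm t$ for all $t\geq 0$ and $x\in X$; setting $C := \max(C^+,C^-)$ produces
$$u_t(x) \leq C + C|t|, \qquad (t,x)\in \mathbb{R}\times X,$$
which is sublinearity of the line.

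For the second assertion, suppose in addition that $u_0 \in L^\infty$ and pick $M>0$ with $|u_0|\leq M$ on $X$. The upper bound just obtained already controls $u_t$ from above at each fixed $t$, so only a lower bound needs to be produced. Here I will use the crucial feature that distinguishes a line from a ray: the complexification $u(x,s):= u_{\textup{Re }s}(x)$ is $\pi^*\omega$-psh and $S^1$-invariant on \emph{all} of $X\times \mathbb{C}$, so for each fixed $x\in X$ the function $t\mapsto u_t(x)$ is an $S^1$-invariant subharmonic function on $\mathbb{C}$, hence convex in $t\in \mathbb{R}$ (or identically $-\infty$). Since $u_0(x)\in\mathbb{R}$ is finite, the slice is everywhere finite and convex. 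Applying midpoint convexity at the origin and invoking the upper bound on $u_{-t}$,
$$u_0(x) \leq \tfrac{1}{2}\bigl(u_t(x) + u_{-t}(x)\bigr) \implies u_t(x) \geq 2u_0(x) - u_{-t}(x) \geq -2M - C - C|t|,$$
uniformly in $x\in X$. Combining with the upper bound yields $\|u_t\|_{L^\infty(X)} < \infty$ for every $t\in \mathbb{R}$, so $t\mapsto u_t$ is a bounded turn.

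There is no real obstacle here: the only substantive point is the convexity of each slice $t\mapsto u_t(x)$, and this is precisely what line-structure (as opposed to ray-structure) supplies. The phrase ``in particular'' in the statement is justified by the fact that sublinearity produced only a one-sided upper bound on $u_{-t}$, and it is exactly the availability of this opposite direction, together with convexity, that converts a lower bound at $t=0$ into a lower bound at every $t$.
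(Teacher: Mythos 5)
Your treatment of the first assertion — sublinearity — is circular, and that assertion is where the actual content of the lemma lies. The definition of an $L^p$ turn only requires that $u^{\pm}_t$ be geodesic rays in the sense of the metric condition \eqref{eq: d_p_geod} (equivalently, the maximality property \eqref{eq: vabt_eq} on every segment). Sublinearity is written into the definition of a \emph{psh} geodesic ray by fiat, so to invoke it you would already need to know that $u^{\pm}_t$ are sublinear — which is precisely what is being claimed. Nothing in the maximality/metric condition hands you this for free: each slice $t\mapsto u^{\pm}_t(x)$ and hence $t\mapsto \sup_X u^{\pm}_t$ is convex, but convex functions can grow superlinearly, so an actual input is required. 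The paper supplies it by normalizing $u_0\leq 0$, comparing $u^{\pm}_t$ with the parallel rays $v^{\pm}_t$ emanating from $0$ (built as increasing limits of geodesic segments, so that $u_{\pm t}\leq v^{\pm}_t$ by the comparison principle), and then applying the linear growth bound $v^{\pm}_t\leq ct$ of \cite[Lemma 3.2]{DX22}, which is available because these rays emanate from the bounded potential $0$. Your proof needs either this reduction or a direct citation of a linear bound on $\sup_X u^{\pm}_t$ for metric geodesic rays with arbitrary finite-energy initial point; appealing to ``sublinearity is part of the definition'' assumes the conclusion.

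Your argument for the second assertion is correct and in fact fills in a detail the paper leaves implicit (its proof stops after establishing $u_t\leq c|t|$): granting the upper bound, midpoint convexity of the finite convex slice $t\mapsto u_t(x)$ at $t=0$ gives $u_t\geq 2u_0 - u_{-t}\geq -2M - c - c|t|$, hence $u_t\in L^\infty$. One small slip: the complexification is invariant under imaginary translations (not $S^1$-invariant), but the resulting convexity in $t=\textup{Re}\,s$ is exactly what you use, so this does not affect the argument.
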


\begin{proof} By adding a constant, we can assume that $u_0 \in \mathcal E^1$ satisfies $u_0 \leq 0$. Let $[0,\infty) \ni t \to v^+_t, \  v^-_t \in \mathcal E^1$ be the rays emanating from $v^+_0 = v^-_0=0$  and parallel to $[0,\infty) \ni t \to u_t, \ u_{-t} \in \mathcal E^1$ respectively \cite[Proposition 4.1]{DL20}.

By the construction of $v^+_t$ in the proof of \cite[Proposition 4.1]{DL20} (see also Remark \ref{rem: ray_max}) we have that $u_t \leq v^+_t$ and $u_{-t} \leq v^-_t$. Due to \cite[Lemma 3.2]{DX22} there exists $c>0$ such that $ v^+_t  \leq tc$ and $\leq v^-_t \leq tc$. We obtain that $u_t \leq c |t|$, finishing the proof.
\end{proof}

We say that test line $\tau \to v_\tau$ is \emph{maximal} if $P[v_\tau](\check v_0) = v_\tau$ for any $\tau \in \Bbb R$. Our next observation is that maximal test lines and geodesic turns are dual:

\begin{prop}\label{prop: turn_RWN} The maps $\{u_t\}_t \to \{\hat u_\tau\}_\tau$ and $\{v_\tau\}_\tau \to \{\check v_t\}_t$ give bijective correspondences between $L^p$ turns and maximal $L^p$ test lines. 
\end{prop}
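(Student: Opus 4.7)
The plan is to reduce the bijection to the already-established ray-version correspondence by decomposing a subgeodesic line into its two constituent rays. Let $t \mapsto u_t$ be a sublinear $L^p$ subgeodesic line with Legendre transform $v = \hat u$. The preceding lemma identifies
\[
\hat u^+_\tau = v^+_\tau := \sup_{\sigma \geq \tau} v_\sigma, \qquad \hat u^-_\tau = v^-_\tau := \sup_{\sigma \leq -\tau} v_\sigma,
\]
and $u^+_0 = u^-_0 = u_0 = \check v_0$, so the $L^p$ condition passes transparently between the line and the two rays. Applying Theorem \ref{thm: max_fin_en_test_curve_ray_duality} to each ray separately and combining with Proposition \ref{prop: RWN_sublinear_subgeod_lines}, the statement of the proposition becomes the equivalence
\[
v \text{ is a maximal } L^p \text{ test line} \iff v^+,\, v^- \text{ are both maximal } L^p \text{ test curves}.
\]

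For the $(\Leftarrow)$ direction, I would use the pointwise identity $v_\tau(x) = \min(v^+_\tau(x), v^-_{-\tau}(x))$, which follows by a direct case split on whether $\tau$ exceeds or lies below the argmax $\tau^*(x)$ of the concave profile $\sigma \mapsto v_\sigma(x)$: in either case one of $v^+_\tau(x), v^-_{-\tau}(x)$ equals $v_\tau(x)$ while the other equals $\check v_0(x) \geq v_\tau(x)$. Given any candidate $w := P[v_\tau](\check v_0)$, one has $w \leq \check v_0$ and $w \leq v_\tau + C$, hence $w \leq v^\pm_{\cdot} + C$; since $v^\pm_{-\infty} = \check v_0$ and $v^\pm$ are maximal, this forces $w \leq P[v^+_\tau](\check v_0) = v^+_\tau$ and similarly $w \leq v^-_{-\tau}$. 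Taking the min gives $w \leq v_\tau$, hence equality.

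For the $(\Rightarrow)$ direction, the strategy is comparison with a turn-maximization. Let $u := \check v$ and let $\bar u^\pm$ denote the parallel $L^p$ geodesic-ray maximizations of $u^\pm$ at $u_0$ as in Remark \ref{rem: ray_max}, so $\bar u^\pm \geq u^\pm$ and $\bar u^\pm_0 = u_0$. Gluing $\bar u^\pm$ at $u_0$ produces a subgeodesic line $\tilde u \geq u$ with $\tilde u_0 = u_0$ that is by construction a turn; applying the $(\Leftarrow)$ direction already proved, its Legendre transform $\tilde v := \hat{\tilde u}$ is a maximal $L^p$ test line with $\tilde v \geq v$ and $\check{\tilde v}_0 = \check v_0$. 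The crux is to conclude $\tilde v = v$, for then the bijection in Proposition \ref{prop: RWN_sublinear_subgeod_lines} forces $\tilde u = u$, exhibiting $u$ as a turn as required.

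The main obstacle is precisely the equality $\tilde v = v$. I would deduce it by applying the maximality of $v_\tau = P[v_\tau](\check v_0)$ to the slice $\tilde v_\tau$, which admits the pointwise description $\tilde v_\tau = \min(\bar v^+_\tau, \bar v^-_{-\tau})$ with $\bar v^\pm_\tau = P[v^\pm_\tau](\check v_0)$. Since $\tilde v_\tau \leq \check v_0$ is automatic, the only non-trivial step is to verify that $\tilde v_\tau \leq v_\tau + C$ for some $C$, after which maximality of $v$ immediately yields $\tilde v_\tau \leq P[v_\tau](\check v_0) = v_\tau$ and thus equality. I expect this singularity comparison to follow from a case analysis along the decreasing and increasing sides of the concave profile $\sigma \mapsto v_\sigma(x)$, exploiting that each $v_\sigma = P[v_\sigma](\check v_0)$ is itself a $P$-envelope relative to $\check v_0$ and that the min with $\bar v^-_{-\tau}$ cancels the less-singular contribution of $\bar v^+_\tau$ on the increasing side (and vice-versa).
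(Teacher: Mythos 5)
Your forward direction (an $L^p$ turn has maximal Legendre transform) is essentially the paper's argument: one uses the identity $\hat u_\tau=\min(\hat u^+_\tau,\hat u^-_{-\tau})$ and the maximality of the two half test curves (Theorem \ref{thm: max_fin_en_test_curve_ray_duality}) to force $P[\hat u_\tau](u_0)=\hat u_\tau$; where the paper cites the proof of \cite[Lemma 5.2]{DDL5}, you spell out the competitor argument directly, which is fine.

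The converse is where the proposal has genuine gaps, in two places. First, the gluing step: concatenating the maximized rays $\bar u^{\pm}$ at $u_0$ need not produce a subgeodesic line. The inequality $\tilde u\ge u$ together with $\tilde u_0=u_0$ does give convexity of $t\mapsto\tilde u_t(x)$ at $t=0$, but that only yields subharmonicity of $s\mapsto\tilde u(x,s)$ along the slices $\{x\}\times\mathbb{C}$; plurisubharmonicity of $\tilde u$ on $X\times\mathbb{C}$ across the wall $\{\operatorname{Re}s=0\}$ is strictly stronger. Dually, your candidate $\tilde v_\tau=\min\bigl(P[\hat u^+_\tau](u_0),P[\hat u^-_{-\tau}](u_0)\bigr)$ is a minimum of two $\omega$-psh functions, which has no reason to be $\omega$-psh: for the original line the corresponding minimum equals $\hat u_\tau$ and is psh by Kiselman, but exactly this is lost once each half is maximized separately. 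Second, the step you label as ``expected,'' namely $\tilde v_\tau\le v_\tau+C$, reduces to $P[v^{\pm}_\tau](\check v_0)\le v^{\pm}_\tau+C$, i.e.\ to the claim that the envelope of singularity type does not improve the singularity of $v^{\pm}_\tau$. This is false in general: one always has $P[\psi](v)\ge\psi$, but $P[\psi](v)$ can be strictly less singular than $\psi$ (the maximization procedure in Section \ref{sec: prelim} only preserves non-pluripolar masses, not singularity types), so the inequality cannot be assumed. The paper avoids both issues by a different, local argument: if $t\mapsto\check v^+_t$ were not a psh geodesic ray, one replaces $\check v$ on a bounded interval $[a,b]$ by the geodesic segment with the same endpoints, obtaining a subgeodesic line $\phi\gneq\check v$, and then checks by a direct Legendre-transform computation (after normalizing $\tau^+_v=0$ so that $\check v_t$ is $t$-decreasing) that $\hat\phi_\tau\le\check v_0$ and $\hat\phi_\tau\le v_\tau+(a-b)\tau$; maximality of $v$ then forces $\hat\phi=v$, hence $\phi=\check v$, a contradiction. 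You would need either to adopt that route or to supply proofs of both missing steps.
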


\begin{proof} Let $t \to u_t$ be an $L^p$ turn and $u^{\pm}_t$ the two geodesic rays emanating from $u_0$ defined in \eqref{eqn: conc_rays}. From Theorem \ref{thm: max_fin_en_test_curve_ray_duality} it follows that $P[u_0](\hat u^+_\tau)=\hat u^+_\tau$ and $P[u_0](\hat u^-_\tau)=\hat u^-_\tau$. 

Observe that $\hat u_\tau = \min(\hat u^+_\tau,\hat u^-_{-\tau})$; this follows from $\tau$-concavity and the formula for $\hat{u}_\tau^{\pm}$ given right after \eqref{eqn: conc_rays}. Then (the proof of) \cite[Lemma 5.2]{DDL5} implies that $P[\hat u_\tau](u_0) = \hat u_\tau$, proving maximality.

Now suppose that $\tau \to v_\tau$ is a maximal $L^p$ test line. We will show that $t \to \check v^+_t$ is an $L^p$ ray; the same proof holds for $t \to \check v^-_t$. The argument is essentially the same as the reverse direction of \cite[Theorem 3.7(ii)]{DX22}. By elementary properties of the Legendre transform we also assume that $\tau^+_v =0$, so that $t \to \check v_t$ is $t$-decreasing. 

Now assume by contradiction that $t \to \check v^+_t$ is not a psh geodesic ray. Consequently, due to the comparison principle for geodesic segments, there exists $0 \leq a < b$ such that 
\[
\check v^+_{(1-t)a + tb} \lneq \chi_t, \ \  t \in (0,1)\,,
\]
where $[0,1] \ni t \to \chi_t \in \mathcal E^p_\omega$ is the $L^p$ geodesic segment joining $v^+_a$ and $v^+_b$. Now let $t \to \phi_t $ be the sublinear subgeodesic line such that $\phi_t := \check v_t$ for $t \not\in (a,b)$ and $\phi_{a(1-t) + bt} := \chi_t$ otherwise.

Trivially, $\check v_t \leq \phi_t \leq 0$, hence by duality, $v_\tau \leq \hat \phi_\tau$ and $\tau^+_v = \tau^+_{\hat \phi}=0$.  
We claim that  $ \hat \phi_\tau \leq  v_\tau + \tau(a-b)$ for any $\tau \in \mathbb R$. Indeed, since $\tau^+_v = \tau^+_{\hat \phi}=0$, we only need to show this for $\tau \leq 0$. For such $\tau$, we have 
\[
\inf_{t \in [a,b]}(\phi_t - t \tau) \leq \phi_b - b \tau = \check v_b - b \tau  \leq \inf_{t \in [a,b]}(\check v_t - t \tau) +  (a-b) \tau\,,
\]
where in the last inequality we used that $t \mapsto \check v_t$ is decreasing.

Since $\hat \phi_\tau \leq \check v_0$ and $\hat \phi_\tau \leq v_\tau + (a-b)\tau$, by the maximality of $\tau \to v_\tau$, we obtain that $v_\tau = \hat \phi_\tau$. An application of the Legendre transform now gives that $\check v_t=\phi_t$, a contradiction. Hence $t \to v^+_t$ is an $L^p$ geodesic ray, as desired.
\end{proof}

\section{The Ross--Witt Nystr\"om correspondence for lines}

We are ready to prove the first main result in our paper:

\begin{theorem}\label{thm: finite_en_line_char} The maps $\{u_t\}_t \to \{\hat u_\tau\}_\tau$ and $\{v_\tau\}_\tau \to \{\check v_t\}_t$ give bijective correspondences between $L^p$ (resp. bounded) geodesic lines and $L^p$ (resp. bounded) test lines with $\int_X \omega^n_{\hat u_\tau}=0$ for all $\tau \in (\tau^-_u, \tau^+_u)$.
\end{theorem}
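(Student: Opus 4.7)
The plan is to apply the ray-level Ross--Witt Nystr\"om correspondence (Theorem \ref{thm: max_fin_en_test_curve_ray_duality}) to the two half-rays of a line, coupled with the affinity of the Monge--Amp\`ere energy $I$ along geodesic lines (\cite[Theorem 6.2]{BBGZ}) and the volume diamond inequality of \cite{DDL5}. The central pointwise identity, exploited in both directions, is that for any test line $\tau \to w_\tau$, $\tau$-concavity combined with the formulas of Lemma \ref{lem: tau_line_ray} gives
\[
w_\tau = \min(w^+_\tau, w^-_{-\tau}), \qquad \check w_0 = \max(w^+_\tau, w^-_{-\tau}).
\]
Taking non-pluripolar Monge--Amp\`ere masses (valid since the minimum remains $\omega$-psh, as a consequence of the volume diamond) and using that $\check w_0 \in \mathcal E^p_\omega$ has full mass $V$, we obtain the mass identity
\[
\int_X \omega^n_{w^+_\tau} + \int_X \omega^n_{w^-_{-\tau}} = V + \int_X \omega^n_{w_\tau}.
\]

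\textbf{Forward direction.} Let $t \to u_t$ be an $L^p$ line. By Proposition \ref{prop: RWN_sublinear_subgeod_lines} and Proposition \ref{prop: turn_RWN} (since a line is in particular a turn), $\hat u_\tau$ is a maximal $L^p$ test line. Affinity of $I$ along the line forces $I\{u^+\} + I\{u^-\} = 0$. Expanding via the ray slope formula \eqref{eq: I_RWN_form} on each side and substituting the mass identity above, the $V(\tau^+_u - \tau^-_u)$ terms cancel, yielding
\[
\int_{\tau^-_u}^{\tau^+_u} \int_X \omega^n_{\hat u_\tau} \, d\tau = 0.
\]
Non-negativity of the integrand forces zero mass almost everywhere. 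By unimodality of $\tau \mapsto \int_X \omega^n_{\hat u_\tau}$ (monotone away from the argmax, via \cite[Theorem 2.3]{DDL2}), zero mass holds everywhere on $(\tau^-_u, \tau^+_u)$.

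\textbf{Backward direction.} Let $\tau \to v_\tau$ be a zero-mass $L^p$ test line. Proposition \ref{prop: RWN_sublinear_subgeod_lines} gives a sublinear $L^p$ subgeodesic line $\check v_t$, and a direct computation shows that the Legendre transform of the ray $t \to \check v_t$, $t\geq 0$, is precisely $v^+_\tau$. The proof then proceeds in two steps. First, using the volume diamond of \cite{DDL5} together with the zero-mass hypothesis, one shows that $v^\pm_\tau$ are maximal test curves, so that Theorem \ref{thm: max_fin_en_test_curve_ray_duality} yields $L^p$ geodesic rays $\check v^\pm_t$, making $\check v_t$ an $L^p$ turn. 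Second, reversing the forward calculation with the mass identity applied to $v_\tau$ (of zero mass), one obtains $I\{\check v^+\} + I\{\check v^-\} = 0$; combined with the per-ray affinity of $I$ along the geodesic half-rays, this yields global affinity of $t \mapsto I(\check v_t)$ on $\Bbb R$. By \cite[Theorem 6.2]{BBGZ}, $\check v_t$ is an $L^p$ geodesic line. Bijectivity follows directly from Proposition \ref{prop: RWN_sublinear_subgeod_lines}, and the bounded case is analogous throughout.

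\textbf{Main obstacle.} The hardest step is establishing maximality of $v^\pm_\tau$ in the first step of the backward direction, purely from the zero-mass hypothesis. This is where the volume diamond inequality plays its essential role; as remarked after Theorem \ref{thm: RWN_lines}, this machinery replaces the local-boundedness-off-an-analytic-subset assumption of \cite{WN24}, which generically fails for the Legendre transform of a weak geodesic line.
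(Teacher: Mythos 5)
Your backward direction is close in spirit to the paper's, but the proposal has a genuine gap at its core: the ``mass identity''
\[
\int_X \omega^n_{w^+_\tau} + \int_X \omega^n_{w^-_{-\tau}} = V + \int_X \omega^n_{w_\tau}
\]
is asserted, not proved. Non-pluripolar Monge--Amp\`ere masses satisfy no inclusion--exclusion law for $\min/\max$; the volume diamond of \cite{DDL5} gives only the inequality $\int_X \omega^n_{w^+_\tau} + \int_X \omega^n_{w^-_{-\tau}} \leq \int_X \omega^n_{P(w^+_\tau,w^-_{-\tau})} + \int_X \omega^n_{\max(w^+_\tau,w^-_{-\tau})} = \int_X \omega^n_{w_\tau} + V$. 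The reverse inequality is precisely the open question the paper records at the end of Section 4 (where equality in the diamond is described as being of ``intrinsic interest''). This kills your forward direction: with only ``$\leq$'' available, the chain $0 = I\{u^+\}+I\{u^-\}$ combined with the ray slope formula yields $0 \leq \frac{1}{V}\int \big(\int_X\omega^n_{\hat u_\tau}\big)\,d\tau$, which is vacuous rather than forcing the integral to vanish. (The subsequent a.e.-to-everywhere upgrade via ``unimodality'' of $\tau\mapsto\int_X\omega^n_{\hat u_\tau}$ is also unsubstantiated.) The paper proves the forward direction by an entirely different mechanism: maximality from Proposition \ref{prop: turn_RWN} places $\mathrm{supp}\,\omega^n_{\hat u_\tau}$ inside $\{\hat u_\tau = u_{t_0}-t_0\tau\}$ for \emph{every} $t_0$ (by translating the line in $t$), and a point in the intersection over all $t_0$ forces $\hat u_{\tau'} = -\infty$ for $\tau'\neq\tau$; hence the support is pluripolar and the non-pluripolar measure vanishes. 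You need some argument of this kind.

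In the backward direction the ``$\leq$'' side of the diamond is the one you need, and there your outline essentially matches the paper --- except that your Step 1 (maximality of $v^\pm_\tau$ from the zero-mass hypothesis), which you yourself flag as the main obstacle, is never carried out. It is also unnecessary: Corollary \ref{cor: maximiz_I_slope_for} computes the Monge--Amp\`ere slope of an arbitrary sublinear $L^1$ \emph{sub}geodesic ray from its Legendre transform, so one can sandwich $I\{u^+\}+I\{u^-\}$ between $0$ (convexity of $I$ along the subgeodesic line) and $0$ (diamond inequality plus zero mass) without first knowing that the half-rays are geodesic; affinity of $I$ then upgrades the subgeodesic line to a geodesic line in one step. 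Reorganizing along these lines removes the unproved maximality step entirely.
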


We point out that the zero mass condition can not be extended to the endpoints of the interval $[\tau^-_u, \tau^+_u]$ in the above theorem. This is immediately seen for a constant ray $t \to u_t = u \in \mathcal H_\omega$, whose Legendre transform satisfies $\hat u_0 = u $ and $\hat u_\tau \equiv -\infty$ otherwise. However for bounded test lines with $\tau^-_u< \tau^+_u$,  this property does hold per Remark \ref{rem: endpoints}.

A consequence of our argument below is that $L^p$ geodesic lines are always $t$-Lipschitz; moreover their  test lines are always maximal. We will only argue the $L^p$ case of the above theorem. Once this is done, the bounded case is a simple argument left to the reader.

\begin{proof} First we suppose that $t \to u_t$ is a non-constant $L^p$ geodesic line. Due to Proposition \ref{prop: turn_RWN}, $\tau \to \hat u_\tau$ is an $L^p$ test line, and we only have to argue the zero mass property. We fix $\tau \in (\tau^-_u, \tau^+_u)$.

As geodesic lines are also geodesic turns, we obtain that $P[\hat u_\tau](u_0) = \hat u_\tau$ from Proposition \ref{prop: turn_RWN}. In particular, by \cite[Theorem 3.8]{DDL2}, $\omega_{\hat u_\tau}^n$ is supported on $\{\hat u_\tau = u_0\}$. Fixing $t_0 \in \Bbb R$, we introduce the `shifted' geodesic line $v_t:= u_{t + t_0}$. Since, $\hat v_\tau = \inf_{t \in \Bbb R }(u_{t + t_0} - t \tau) = \hat u_\tau + t_0 \tau$, by the same reasoning, 
$\omega_{\hat u_\tau}^n = \omega_{\hat v_\tau}^n$ is supported on the contact set $\{\hat v_\tau = v_0\} = \{\hat u_\tau = u_{t_0} - {t_0} \tau\}$. 

As $t_0 \in \Bbb R$ is arbitrary and $\tau$ is fixed, 
we obtain that 
$$\textup{supp }  \omega_{\hat u_\tau}^n  \subseteq \bigcap_{t_0 \in \Bbb R} \{\hat u_\tau = u_{t_0} - {t_0} \tau\}.$$ 
Now, fix $\tau'\in(\tau_u^-, \tau_u^+)$, $\tau'\not= \tau$ and let $x\in \bigcap_{t_0 \in \Bbb R} \{\hat u_\tau = u_{t_0} - {t_0} \tau\}$. If $u_0(x) > -\infty$ then we have $u_{t_0}(x) = u_0(x) + t_0 \tau$ for all $t_0\in\R$, implying that $\hat{u}_{\tau'}(x) = -\infty$. Since we also clearly have $\{u_0 = -\infty\}\subseteq \{\hat{u}_{\tau'} = -\infty\}$, we conclude that $\bigcap_{t_0 \in \Bbb R} \{\hat u_\tau = u_{t_0} - {t_0} \tau\} \subseteq \{\hat{u}_{\tau'} = -\infty\}$; it follows that $\omega_{\hat{u}_\tau}^n$ is supported on a pluripolar set, and thus vanishes.

\smallskip

Next,  let $\tau \to \hat u_\tau$ be a zero mass $L^p$ test line, i.e., $\int_X \omega^n_{\hat u_\tau}=0$ for $\tau  \in (\tau^-_u, \tau^+_u)$.  Due to Proposition \ref{prop: RWN_sublinear_subgeod_lines}, $t \to u_t$ is automatically an $L^p$ sublinear subgeodesic line. To finish the proof,  we need to show that $t \to u_t$ is in fact an $L^p$ geodesic line.

Let $u^{\pm}_t$ be the two $L^p$ subgeodesic rays from \eqref{eqn: conc_rays}. To show that $t \to u_t$ is a line, we will argue that
\begin{equation}\label{eq: I_slope_eq}
I\{u^+_t\} := \lim_{t \to \infty} \frac{I(u^+_t)}{t} = - I\{u^-_t\} := \lim_{t \to \infty} \frac{I(u^-_t)}{t}.
\end{equation}
Indeed, this would imply that the convex function $t \to I(u_t), \ t \in \Bbb R$, is in fact affine; hence $t \to u_t$ is a geodesic line, as desired.

Using the formula for $\hat{u}_{\tau}^{\pm}$ immediately after \eqref{eqn: conc_rays}, we see that $\hat{u}_\tau^+ = u_0$ for $\tau \leq \tau_u^-$ and $\hat{u}_\tau^- = u_0$ for $\tau \leq -\tau_u^+$. Since $\int_X \omega_{u_0}^n = V$, we deduce from Corollary \ref{cor: maximiz_I_slope_for} and Lemma \ref{lem: tau_line_ray} that:
\[
I\{u^-_t\} = -\tau^-_{u} +\frac{1}{V} \int^{-\tau^-_{u}}_{-\tau_u^+} \Big( -V +\int_X \omega^n_{\hat u^-_\tau}\Big)d\tau = -\tau^+_{u} +\frac{1}{V} \int^{\tau^+_{u}}_{\tau_u^-} \int_X \omega^n_{\hat u^-_{-\tau}}\, d\tau\ \text{ and }
\]
\[
I\{u_t^+\} = \tau^+_u+\frac{1}{V} \int_{\tau_u^-}^{\tau^+_u} \Big(-V  + \int_X \omega^n_{\hat u^+_\tau}\Big)d\tau = \tau^-_u+\frac{1}{V} \int_{\tau_u^-}^{\tau^+_u}\int_X \omega^n_{\hat u^+_\tau}\, d\tau.
\]
By convexity of $t\to I(u_t)$, we have $-I\{u^-_t\} \leq I\{u^+_t\}$. Using \cite[Theorem 1.2]{DDL5}, we see that:
\begin{flalign}
0 \leq I\{u^+_t\} + I\{u^-_t\} &= \frac{1}{V}  \int_{\tau^-_u}^{\tau^+_u}\Big(\int_X \omega^n_{\hat u^-_{-\tau}} + \int_X \omega^n_{\hat u^+_{\tau}}\Big)d\tau - (\tau^+_u - \tau^-_u) \nonumber \\
& \leq \frac{1}{V} \int_{\tau^-_u}^{\tau^+_u}\Big(\int_X \omega^n_{P(\hat u^-_{-\tau}, \hat u^+_{\tau})} + \int_X \omega^n_{\max(\hat u^-_{-\tau}, \hat u^+_{\tau})}\Big)d\tau - (\tau^+_u - \tau^-_u).\label{eq: int_ineq}
\end{flalign}
Now, by $\tau$-convexity and the formula for $\hat{u}^{\pm}_\tau$, we see that $\max(\hat u^+_{\tau}, \hat u^-_{-\tau})=u_0 \in \mathcal{E}^p_\omega$ and $P(\hat u^-_{-\tau}, \hat u^+_{\tau}) = \min(\hat u^-_{-\tau}, \hat u^+_{\tau}) = \hat u_\tau$, so that:
$$\int_X \omega^n_{P(\hat u^-_{-\tau}, \hat u^+_{\tau})}=\int_X \omega^n_{\hat u_\tau}=0\ \ \text{ and }\ \ \int_X \omega^n_{\max(\hat u^-_{-\tau}, \hat u^+_{\tau})} = \int_X \omega^n = V.$$ 
Putting this into \eqref{eq: int_ineq} we obtain:
$$0 \leq  \frac{1}{V}  \int_{\tau^-_u}^{\tau^+_u}\Big(\int_X \omega^n_{\hat u^-_{-\tau}} + \int_X \omega^n_{\hat u^+_{\tau}}\Big)d\tau -(\tau^+_u - \tau^-_u) \leq (\tau^+_u - \tau^-_u)-(\tau^+_u - \tau^-_u) =0.$$
Hence we obtain \eqref{eq: I_slope_eq}, what we needed to prove. 
\end{proof}
In the last step of the above argument we proved the following identity for $L^p$ geodesic lines $t \to u_t$:
\begin{equation}\label{eq: volume_eq}
\int_X \omega^n_{\hat u^-_{-\tau}} + \int_X \omega^n_{\hat u^+_{\tau}} = \int_X \omega^n,  \ \ \ \ \tau \in \Bbb R, \ a.e.,
\end{equation}
where we used the convention $\int_X \omega_{-\infty}^m =0$. From \cite[Lemma 3.9]{DX22} we obtain that the function $\tau \to \int_X \omega_{\hat u^+_\tau}^n$ is continuous on $(-\infty,\tau^+_u)$, and the function $\tau \to \int_X \omega_{\hat u^-_{-\tau}}^n$ is continuous  on $(\tau^-_u,\infty)$. From this it follows that the above identity holds for all $\tau \in \Bbb R$.

More generally, for any sublinear, bounded (or $L^p$) subgeodesic line  $t \to u_t$ $\tau \to u_\tau$, it is natural to ask if
\[
\int_X \omega^n_{\hat u^-_{-\tau}} + \int_X \omega^n_{\hat u^+_{\tau}} = \int_X \omega^n_{u_\tau} + \int_X \omega^n,  \ \ \ \ \tau \in \Bbb R, \ a.e.
\]
Such an equality would have intrinsic interest in producing non-trivial examples of equality in the volume diamond inequality of \cite[Theorem 1.3]{DDL5}.

\begin{remark} \label{rem: endpoints} Let $\{v_\tau\}_\tau$ be a bounded zero mass test line with $\tau^-_v < \tau^+_v$. Then $v_{\tau^+_v},v_{\tau^-_v}$ are not identically equal to $-\infty$ and are of zero mass.
\end{remark}

\begin{proof} We start with showing that $v_{\tau^+_v},v_{\tau^-_v}$ are not identically equal to $-\infty$. 
By replacing $\{v_\tau\}_\tau$ with $\{v_{-\tau}\}_\tau$ it is enough to argue this for $v_{\tau^+_v}$. Moreover, by composing with an affine $\tau$-transformation, we can assume that $\tau^-_v=-1$ and $\tau^+_v=0$. This means that the bounded geodesic line $t \to \check{v}_t$ is $t$-decreasing. 

We use \cite[Theorem 1]{Da17}(ii) to conclude that $\sup_X (\check v_t - \check v_0) = C t$ for $t \geq 0$ for some $C \leq 0$. Since, $0= \tau^+_v \leq C \leq 0$, we must have that $C=0$. This implies that $\sup_X \check v_t, \ t \geq 0$ is uniformly bounded. Hence the decreasing set of potentials $\{\check v_t\}_{t \geq 0}$ must have an $L^1$-limit, not identically equal to $-\infty$. By definition, this limit is equal to $v_{\tau^+_v}=v_{0}$.

To argue that both $v_{\tau^+_v},v_{\tau^-_v}$ are of zero mass, we use $\tau$-concavity, monotonicity of volumes \cite[Theorem 1.2]{WN19}, and multlinearity of non-pluripolar products:
$$0 \leq \frac{1}{2^n}\int_X  \omega_{v_{\tau^+_v}}^n + \frac{1}{2^n}\int_X  \omega_{v_{\tau^-_v}}^n \leq \int_X  \omega_{\frac{1}{2}v_{\tau^+_v} + \frac{1}{2}v_{\tau^-_v}}^n \leq \int_X  \omega_{v_{\frac{1}{2} \tau^-_v + \frac{1}{2} \tau^+_v}}^n  =0.$$
\end{proof}

\section{Examples of geodesic lines}\label{sec: examples}

In this section, we use Theorem \ref{thm: RWN_lines} to construct various examples of geodesic lines. We will start by giving a general criterion that will be used many times:

\begin{prop}\label{prop: gen_crit} Let $v_0,v_1 \in \PSH_\omega$ so that each of the non-pluripolar currents $\omega_{v_0}^k, \ k \in \{1,\ldots,n\}$ and $\omega_{v_1}^n$ vanish, and $\max (v_0,v_1)$ is bounded. Additionally, let $g: [0,1] \to \Bbb R$ be a continuous concave function. Then the following curve $\tau \to v_\tau$ is a bounded zero mass test line 
$$v_\tau = (1-\tau)v_0 + \tau v_1 + g(\tau), \ \ \ \tau \in [0,1], \ \ \ v_\tau = -\infty, \ \ \tau \in \Bbb R \setminus [0,1].$$
\end{prop}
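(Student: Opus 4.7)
Three things need to be checked: (a) $\tau\mapsto v_\tau$ is a test line in the sense of the paper; (b) the test line is bounded, i.e.\ $\check v_0\in L^\infty\cap\PSH_\omega$; and (c) $\int_X \omega_{v_\tau}^n=0$ for every $\tau\in(\tau_v^-,\tau_v^+)=(0,1)$. I would dispatch (a) and (b) as routine book-keeping and then concentrate on (c), which is the real content.

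For (a): each $v_\tau$ with $\tau\in[0,1]$ is a positive convex combination of two $\omega$-psh functions plus a real constant, hence lies in $\PSH_\omega$. Fixing $x\in X$, the map $\tau\mapsto(1-\tau)v_0(x)+\tau v_1(x)$ is affine on the region where both $v_i(x)$ are finite (and $\equiv-\infty$ otherwise); adding the concave continuous function $g(\tau)$ and extending by $-\infty$ outside $[0,1]$ preserves both $\tau$-concavity and $\tau$-upper semicontinuity, while providing the required compact $\tau$-support. For (b): uniform upper bound follows from the fact that every $\omega$-psh function is bounded above on compact $X$, together with $v_\tau\leq\max(v_0,v_1)+\max_{[0,1]}g$. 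For the lower bound, if $M$ bounds $\max(v_0,v_1)$ in sup-norm, then at every $x\in X$ at least one of $v_0(x),v_1(x)$ exceeds $-M$; choosing that endpoint in the supremum yields $\check v_0(x)\geq-M+\min(g(0),g(1))$, so $\check v_0\in L^\infty$.

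For (c), the heart of the argument: since $\omega_{v_\tau}=(1-\tau)\omega_{v_0}+\tau\omega_{v_1}$, multilinearity of the non-pluripolar product gives
$$\int_X \omega_{v_\tau}^n=\sum_{k=0}^{n}\binom{n}{k}(1-\tau)^k\tau^{n-k}\int_X \omega_{v_0}^k\wedge\omega_{v_1}^{n-k}.$$
The $k=0$ term vanishes by the hypothesis $\omega_{v_1}^n=0$. For every $1\leq k\leq n$, I would invoke the mixed Khovanskii--Teissier type inequality
$$\int_X \omega_{v_0}^k\wedge\omega_{v_1}^{n-k}\leq\Bigl(\int_X\omega_{v_0}^n\Bigr)^{k/n}\Bigl(\int_X\omega_{v_1}^n\Bigr)^{(n-k)/n},$$
legitimate in the non-pluripolar setting because $\max(v_0,v_1)\in L^\infty$ controls the relevant masses via the monotonicity theorems of \cite{WN19}. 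The $k=n$ case of the hypothesis gives $\int_X\omega_{v_0}^n=0$, which forces every mixed summand to vanish, and hence $\int_X\omega_{v_\tau}^n=0$.

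The main obstacle is precisely this final implication in (c): passing from ``$\omega_{v_0}^k$ vanishes as a non-pluripolar current'' to the vanishing of all mixed masses $\int_X\omega_{v_0}^k\wedge\omega_{v_1}^{n-k}$. The Khovanskii--Teissier route is the cleanest but requires justifying the inequality in the possibly unbounded non-pluripolar setting; alternatively, one can exploit the fact that the hypothesis gives vanishing at the level of currents (not merely total mass) and argue directly from the truncation definition of the non-pluripolar product that any iterated non-pluripolar wedge containing a vanishing factor must itself be zero.
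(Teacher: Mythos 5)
Your parts (a) and (b) are fine and correspond to what the paper dismisses as ``the other defining properties are straightforward.'' The gap is in (c), in the route you actually commit to. The mixed Khovanskii--Teissier inequality goes the \emph{other} way: log-concavity of $k\mapsto \int_X\omega_{v_0}^k\wedge\omega_{v_1}^{n-k}$ (which is what \cite{DDL4} establishes in the non-pluripolar setting) yields
$$\int_X \omega_{v_0}^k\wedge\omega_{v_1}^{n-k}\;\geq\;\Bigl(\int_X\omega_{v_0}^n\Bigr)^{k/n}\Bigl(\int_X\omega_{v_1}^n\Bigr)^{(n-k)/n},$$
a \emph{lower} bound on the mixed mass, which is vacuous when the right-hand side vanishes. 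The upper bound you invoke is false, and not merely delicate to justify: on $X=\CP^1\times\CP^1$ with $\omega=\omega_1+\omega_2$, take $v_0=\log|s_0|^2_{h_2}$ and $v_1=\log|s_1|^2_{h_1}$ so that $\omega_{v_0}=\omega_1+[\CP^1\times\{p\}]$ and $\omega_{v_1}=\omega_2+[\{q\}\times\CP^1]$; then the non-pluripolar masses satisfy $\int_X\omega_{v_0}^2=\int_X\omega_{v_1}^2=0$ while $\int_X\omega_{v_0}\wedge\omega_{v_1}=\int_X\omega_1\wedge\omega_2=1$. This example also explains why the proposition demands $\omega_{v_0}^k=0$ for \emph{every} $k\in\{1,\dots,n\}$ rather than just the top power: there $\langle\omega_{v_0}\rangle=\omega_1\neq 0$, and vanishing of the two top masses alone does not force the mixed masses to vanish. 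So the ``cleanest route'' cannot be repaired by a more careful justification of the inequality; it needs to be discarded.

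The one-clause alternative at the end of your proposal is the correct argument, and it is exactly what the paper does: since $\omega_{v_0}^k=0$ as a current for each $k\geq 1$, every mixed term $\omega_{v_0}^k\wedge\omega_{v_1}^{n-k}$ with $k\geq 1$ vanishes (by plurifine locality of the non-pluripolar product: on $\{v_0>-j\}\cap\{v_1>-j\}$ it agrees with the Bedford--Taylor product of the truncations, one factor of which is zero there), and the remaining $k=0$ term is $\omega_{v_1}^n=0$ by hypothesis. You should promote that remark from a fallback to the actual proof; as written, the argument you rely on rests on an inequality with the wrong sign.
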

\begin{proof} To check the zero mass property, we use the  multilinearity of non-pluripolar products for $\tau \in [0,1]$:
\[
\omega_{v_\tau}^n = \sum_{k=0}^n {n\choose k} \tau^k (1-\tau)^{n-k} \omega_{v_0}^k\wedge \omega_{v_1}^{n-k} = 0,
\]
where we used that $\omega_{v_0}^k = 0$, for any $k$, and $\omega_{v_1}^n = 0$. The other defining properties of a test line are straightforward. 
\end{proof}

The above criterion provides us the weak geodesic lines of Example \ref{ex: proj}:

\begin{coro}\label{cor: proj_2}
Let $L \to X$ be an ample line bundle such that $c_1(L) = \{\omega\}$. Let $D_0,\ldots, D_n$ be divisors in the class $c_1(L)$ such that $\cap_j D_j = \emptyset$. Let $\varphi_j \in \textup{PSH}_\omega$ such that $\omega_{\varphi_j} = [D_j]$. 
\begin{equation}\leqnomode
u_t := 
\max(\varphi_0, \varphi_1 + t,\ldots, \varphi_n + t) \in \textup{PSH}_\omega \cap C^{0,1}, \ t \in \Bbb R\tag{i}
\end{equation}
defines a $C^{0,1}$ geodesic line. 
\begin{equation}\leqnomode
v_t := \log ( e^{\varphi_0} + \sum_{j =1}^n  e^{\varphi_j + t}) \in \textup{PSH}_\omega \cap C^\infty, \ t \in \Bbb R\tag{ii}
\end{equation}
defines a $C^{\infty}$ geodesic line, which is parallel to $t\to u_t$.
\end{coro}
\begin{proof} We deal with (i).  Let $\hat u_0 := \varphi_0$ and $\hat u_1 := \max(\varphi_1, \ldots, \varphi_n)$. First, we show that $\hat{u}_1$ has zero mass, by arguing as in \cite[Example 6.11, p. 165]{Dem12}. It will suffice to show that $\omega_{\hat{u}_1}^n$ vanishes on $\Omega= X \setminus \big(\cup_j D_j\big)$.

Fix a hermitian metric $h$ on $L$ such that $\omega = R_h$, the curvature form of $h$. Let $s_j$ be defining sections for $D_j$, $j = 1,\ldots, n$. After possibly taking powers of $L$, we can assume that $s_j \in H^0(X,L)$. Then there exist constants $a_j\in\Bbb R$ such that:
\[
\varphi_j = a_j + \log |s_j|^2_h \quad\text{ on }X.
\]
After rescaling $s_j$ we can assume that $a_j=0$. Since $s_1$ does not vanish on $\Omega$, $\frac{s_j}{s_1}$ is holomorphic there and:
\[
\hat{u}_1 = \varphi_1 + \max_{j > 1}\left\{0,\log \left|\frac{s_j}{s_1}\right|^2\right\};
\]
it follows that $\omega_{\hat{u}_1}$ only depends on $n-1$ holomorphic functions on $\Omega$. This forces the top form $\omega_{\hat{u}_1}^n$ to vanish on $\Omega$, as desired.

We introduce the linear curve:
\[
\hat{u}_\tau := (1-\tau)\vp_0 + \tau \max_{1\leq j\leq n} \vp_j = (1-\tau)\hat{u}_0 + \tau\hat{u}_1\quad\tau\in[0,1],
\]
and $\hat u_\tau =-\infty$ otherwise. since $\omega_{\hat{u}_0}^k = 0$, for any $k$ we can apply Proposition \ref{prop: gen_crit} to conclude that $\tau \to \hat u_\tau$ is a bounded zero mass test line. We conclude by Theorem \ref{thm: RWN_lines} that $t \to u_t$ is indeed a $C^{0,1}$ geodesic line. 

We argue part (ii) similarly; in this case, the computation of the Legandre transform $\hat v_\tau$ is slightly more intricate, though still elementary. 

Indeed, we leave it to the reader to verify that for any $a, b > 0$, the Legendre transform of $t\to \log (a + b e^t), t \in \Bbb R$ is $-\infty$ on $\Bbb R \setminus [0,1]$ and on the interval $[0,1]$ it is equal to
\[
(1-\tau) \log(a) + \tau \log(b) -\tau \log \tau - (1-\tau) \log (1-\tau), \quad \tau\in [0, 1].
\]
It follows that
$$\hat v_\tau = (1-\tau) \vp_0 + \tau \log \Big( \sum_{j=1}^n e^{\vp_j}  \Big) -\tau \log \tau - (1-\tau) \log (1-\tau), \ \ \tau \in [0,1]$$
holds at all points where none of the $s_j$ vanish; since both the left and right hand side are qpsh functions however, they must agree on all of $X$.

Finally, observe that for all $\tau\in [0,1]$, $\hat{u}_\tau \leq \hat{v}_\tau \leq \hat{u}_\tau + 2e^{-1} +\log n$, so by \cite[Theorem 1.2]{WN19}, $\tau\to \hat{v}_\tau$ is also a zero mass test line. That $t\to v_t$ and $t\to u_t$ are parallel follows from the same argument as in Example \ref{ex: paral_line}.
\end{proof}

We take the opportunity to point out an elementary direct argument for the proof of the above result that does not involve the use of Theorem \ref{thm: RWN_lines}. As the argument is virtually the same for both parts (i) and (ii) we only deal with the former case. 

Let $u(s,x) = u_{\textup{Re }s}(x)$ for $x \in X$ and $s \in \Bbb C$. We need to show that 
\begin{equation}\label{eq: geod_eq}
(\pi^* \omega + dd^c u)^{n+1} =0 \textup{ on } \Bbb C \times X.
\end{equation}
It is enough to check that this equation holds locally, near an arbitrary point $(s,x) \in \Bbb C \times X$. So let $U \subset X$ be an open neighborhood of $X$ trivializing $L$. As as a result, we can assume that $h \in C^\infty(U)$ is a positive function, $s_j \in \mathcal O(U)$ and $\omega = -d d^c h$. One of the $s_j$ does not vanish at $x$. We assume that  $s_0(x) \neq 0$; the argument is virtually identical if $s_j(x) \neq 0$ for some $j\geq 1$.

On $\Bbb C \times U$ we can expand \eqref{eq: geod_eq} as follows:
\begin{flalign*}
(\pi^* \omega + dd^c u)^{n+1} &= (-dd^c h + dd^c u)^{n+1}  = dd^c (\log \max( \log |s_0|^2, \log |s_1 e^s|^2, \ldots,\log |s_n e^s|^2) ^{n+1}\\
& = dd^c  \Big(\max \Big ( 0, \log \Big|\frac{s_1 e^s}{s_0}\Big|^2, \ldots,\log \Big|\frac{s_n e^s}{s_0}\Big|^2 \Big) ^{n+1}
\end{flalign*}
Since the expression inside the $dd^c$ depends on $n$ holomorphic functions only, the $(n+1)$-intersection above must vanish.
\medskip

On Riemann surfaces many more examples of lines can be provided, that do not arise from algebraic data:

\begin{example}\label{ex: geod_line_surface}
Let $(X,\omega)$ be a Riemann surface. Let $\mu_1,\mu_2$ be two Borel measures supported on polar sets $C_1,C_2$ with disjoint closure. Assume that $\int_X d\mu_1 = \int_X d\mu_2  = \int_X \omega$. Typical examples might be:
\begin{itemize}
 \item when $C_1, C_2$ are discrete and $\mu_1, \mu_2$ are sums of Dirac masses, or
 \item when $C_1, C_2$ are polar Cantor sets and $\mu_1, \mu_2$ are the distribution measures of the corresponding Cantor functions (see \cite[Section 3]{Li}, \cite{Ra} for the construction).
\end{itemize}

By convolving with the Green's function for $\omega$ (explained in detail in \cite[Section 3]{Li}), there exists $u_0,u_1 \in \textup{PSH}_\omega$ such that
$$1 + \Delta_\omega u_j = \mu_j\ \ \ j = 0, 1.$$
The $u_j$ are smooth away from the closure of $C_j$; when the $C_1, C_2$ are discrete or are closed Cantor sets, we also have $\{u_j = -\infty\} = C_j$ (\cite[Proposition 3.3]{Li}). Moreover, the non-pluripolar measures $\omega_{u_j}$ must vanish on $X$. 
Let 
$$u_{\tau} := \tau u_1 + (1-\tau) u_0, \ \ \tau \in [0,1];$$ 
otherwise set $u_\tau \equiv -\infty$. Clearly this is a bounded, zero mass test line, so by Theorem \ref{thm: finite_en_line_char}, the Legendre transform:
$$\check u_t = \sup_{\tau \in [0,1]} (u_\tau + t\tau) = \sup_{\tau \in [0,1]} (\tau u_1 + (1-\tau) u_0 + t\tau) = \max (u_0, u_1 + t), \ \ t \in \Bbb R$$
is a bounded geodesic line. 
When $C_1, C_2$ are discrete, or closed Cantor sets, then $t \to \check u_t$ is additionally a $C^{0,1}$ geodesic line.
\end{example}

Using Proposition~\ref{prop: gen_crit}, we now show that on Riemann surfaces -- modulo the choice of a concave function -- the above construction yields almost all bounded weak geodesic lines:

\begin{theorem}\label{prop: class_RS}
Suppose that $(X,\omega)$ is a Riemann surface, and that $\tau \to v_\tau$ is a bounded, zero mass test line. Then there exists a continuous concave function $g(\tau)$ on $[\tau_{v}^-, \tau_{v}^+]$ such that $\tau\mapsto v_\tau(x) - g(\tau)$ is linear for each $x\in X$ and $\tau \in [\tau_{v}^-, \tau_{v}^+]$.
\end{theorem}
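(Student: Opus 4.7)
After an affine reparametrization, I may assume $\tau_v^- = 0$ and $\tau_v^+ = 1$. By Remark \ref{rem: endpoints}, the endpoint potentials $v_0, v_1 \in \textup{PSH}_\omega$ are not identically $-\infty$ and satisfy $\int_X \omega_{v_j} = 0$ for $j=0,1$. By the multilinearity of non-pluripolar products in dimension one, the linear interpolation
\[
l_\tau := (1-\tau) v_0 + \tau v_1, \quad \tau \in [0, 1],
\]
satisfies $\omega_{l_\tau} = (1-\tau)\omega_{v_0} + \tau \omega_{v_1} = 0$, so $l_\tau$ is a zero mass $\omega$-psh potential. The $\tau$-concavity of $v_\tau$ applied at the endpoints gives $v_\tau \geq l_\tau$ on $X$, with equality at $\tau \in \{0,1\}$. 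It therefore suffices to prove that $v_\tau - l_\tau$ is a constant on $X$ depending only on $\tau$: defining $g(\tau)$ to be that constant yields the stated decomposition, and both $\tau$-concavity and continuity of $g$ on $[0,1]$ follow from the $\tau$-concavity of $v_\tau$ and the $\tau$-linearity of $l_\tau$, combined with $g(0) = g(1) = 0$.

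The main structural observation is that on the open set $U_\tau := X \setminus \{l_\tau = -\infty\}$, which has polar complement, both $v_\tau$ and $l_\tau$ are finite (using $\{v_\tau = -\infty\} \subseteq \{l_\tau = -\infty\}$ from $v_\tau \geq l_\tau$); moreover, since $\dim_{\Bbb C} X = 1$, the zero mass condition forces the positive currents $\omega + i\partial\bar\partial v_\tau$ and $\omega + i\partial\bar\partial l_\tau$ to be concentrated on their respective $\{-\infty\}$-sets, hence to vanish identically on $U_\tau$. Consequently $h_\tau := v_\tau - l_\tau$ is non-negative and harmonic on $U_\tau$. Because polar sets are removable for bounded harmonic functions on Riemann surfaces, it would then suffice to promote this to the global distributional identity
\[
\omega + i\partial\bar\partial v_\tau = \omega + i\partial\bar\partial l_\tau \quad \text{on all of } X,
\]
since elliptic regularity and the maximum principle on compact $X$ would then force $h_\tau \equiv g(\tau)$.

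The main obstacle is establishing this global identity. Both sides are positive measures of total mass $V = \int_X \omega$, concentrated on the common polar set $\{l_\tau = -\infty\} = \{v_0 = -\infty\} \cup \{v_1 = -\infty\}$. The $\tau$-concavity of $v_\tau$ combined with the pointwise bound $v_\tau \geq l_\tau$ yields the Lelong number inequality $\nu_{v_\tau}(p) \leq (1-\tau)\nu_{v_0}(p) + \tau \nu_{v_1}(p) = \nu_{l_\tau}(p)$ for every $p \in X$, so the atomic masses satisfy $(\omega + i\partial\bar\partial v_\tau)(\{p\}) \leq (\omega + i\partial\bar\partial l_\tau)(\{p\})$. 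Together with the equal total mass constraint and the containment of supports, a mass-balancing argument forces equality at every atom, and an analogous comparison, leveraging the Riemann surface structure (e.g.\ testing against plurisubharmonic functions or using a capacity argument), handles the possibly diffuse singular parts. With the measure identity in hand, $h_\tau$ extends to a smooth harmonic function on compact $X$ and is therefore constant, completing the proof.
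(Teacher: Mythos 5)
Your reduction is exactly the paper's: normalize to $[\tau_v^-,\tau_v^+]=[0,1]$, use Remark \ref{rem: endpoints} to see that $v_0,v_1$ have zero mass, form the linear interpolation $l_\tau=(1-\tau)v_0+\tau v_1$ (zero mass by multilinearity), and note $v_\tau\ge l_\tau$ by $\tau$-concavity. At that point the paper finishes in one line by invoking \cite[Lemma 4.1]{ACCP}, which says precisely that two zero-mass potentials on a Riemann surface, one dominating the other, differ by a constant. You instead try to prove this comparison from scratch, and that is where your argument has a genuine gap. The step you flag as "the main obstacle" --- the global identity $\omega+i\partial\bar\partial v_\tau=\omega+i\partial\bar\partial l_\tau$ --- is not actually established. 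Your Lelong-number comparison only controls the atoms, and the "mass-balancing" deduction is invalid: knowing $\mu_{v_\tau}(\{p\})\le\mu_{l_\tau}(\{p\})$ at every point together with equal total masses does \emph{not} force equality at atoms, since a deficit in the atomic part of $\mu_{v_\tau}$ could a priori be compensated by a larger diffuse singular part. The diffuse case is exactly the hard (and relevant) one --- see Example \ref{ex: geod_line_surface}, where the Riesz measures are Cantor distributions with no atoms at all --- and "an analogous comparison \ldots handles the possibly diffuse singular parts" is a gesture, not a proof. A smaller but real issue: removability of polar sets for \emph{bounded} harmonic functions does not apply as stated, because $h_\tau=v_\tau-l_\tau$ is only bounded below; it can tend to $+\infty$ at points of $\{l_\tau=-\infty\}$ where $v_\tau$ stays finite.

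The gap is fixable without the measure identity, staying close to your setup: work with $-h_\tau=l_\tau-v_\tau\le 0$, which is harmonic on $U_\tau=X\setminus\{l_\tau=-\infty\}$ and bounded above. A function harmonic off a closed polar set and locally bounded above extends (via its usc regularization) to a function $w$ on $X$ with $dd^c w\ge 0$; on the compact surface $X$ Stokes' theorem gives $\int_X dd^c w=0$, hence $dd^c w=0$ and $w$ is constant. Since $v_\tau$ and $l_\tau+g(\tau)$ are then two $\omega$-psh functions agreeing a.e., they agree everywhere, and concavity/continuity of $g$ follow as you say. Either supply this extension argument or cite \cite[Lemma 4.1]{ACCP} as the paper does; as written, the crucial step is missing.
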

\begin{proof}

By elementary properties of the Legendre transform, we may assume that $\tau_v^- = 0$ and $\tau_{v}^+ = 1$. Further, by Remark \ref{rem: endpoints}, both $v_0$ and $v_1$ are zero-mass potentials, so we can consider the linear curve:
\[
u_\tau := (1-\tau)v_0 + \tau v_1 \quad \tau\in[0,1];
\]
clearly each $u_\tau$ will be a zero-mass potential. By concavity, we also have that $u_\tau \leq v_\tau.$
Since $X$ is a Riemann surface and both sides are zero-mass, \cite[Lemma 4.1]{ACCP} now implies that they must be equal up to a constant. We deduce the existence of a function $g(\tau)$ such that:
\[
(1-\tau)v_0 + \tau v_1 = v_\tau - g(\tau) \quad\text{ for all }\tau\in[0,1].
\]
Since $\tau \to v_\tau$ is $\tau$-u.s.c. and $\tau$-concave, we obtain that $g(\tau)$ is concave and continuous, finishing the proof.
\end{proof}

In higher dimensions, one cannot hope for such a simple classification of weak geodesic lines:

\begin{example}\label{ref: not_Riemann} Suppose that $\{u_t\}_t, \{v_t\}_t$ are geodesic lines on $(X,\omega_X), (Y,\omega_Y)$; due to the splitting of the Monge--Amp\`ere measure on products one can see that $\{u_t + v_t\}_t$ is a geodesic line on $(X\times Y, \omega_X + \omega_Y)$. The corresponding test line is given by the ``supremal convolution" \cite[Theorem 16.4, p. 145]{Ro70}
\[
\hat{w}_\tau := \sup_{\sigma\in\R} \{\hat{u}_{\tau - \sigma} + \hat{v}_{\sigma}\}.
\]
In particular, if $\hat{u}_\tau = (1-\tau)\vp_0 + \tau\psi_0$ and $\hat{v}_\tau = (1-\tau)\vp_1 + \tau\psi_1$, $\tau\in[0,1]$, then:
\[
\hat{w}_{\tau} = \begin{cases}
(1-\tau)(\vp_0 + \vp_1) + \tau\max\{\vp_0 + \psi_1, \vp_1 + \psi_0\} &\text{ if }0\leq \tau\leq 1\\
(\tau-1)(\psi_0 + \psi_1) + (2-\tau)\max\{\vp_0 + \psi_1, \vp_1 + \psi_0\} &\text{ if }1\leq \tau\leq 2.
\end{cases}
\]
\end{example}

\section{Existence of parallel lines}

We say that two geodesic lines (resp. rays) $t \to v_t,w_t$ are \emph{parallel} if $t \to d_p(v_t,w_t)$ is bounded on $\Bbb R$ (resp. $\Bbb R^+$) \cite{DL20}. Recall that the function $t \to d_p(v_t,w_t)$ is convex due to \cite[Theorem 1.5]{CC3}. Consequently, for parallel lines, $t \to d_p(v_t,w_t)$ is constant, while for parallel rays, $t \to d_p(v_t,w_t)$ is decreasing.

Given a ray $t \to v_t$ in $\mathcal E^p_\omega$, there exists a unique parallel geodesic ray $t \to v_t^h$  emanating from any outside point $h \in \mathcal E^p_\omega$ \cite[Proposition 4.1]{DL20}. Moreover, it follows from the proof of \cite[Proposition 4.1]{DL20} that the parallel rays satisfy a monotonicity property: if $h \leq h'$ then $v^h_t \leq v^{h'}_t$ for all $t \geq 0$.

We now prove our theorem regarding existence of parallel geodesic lines.

\begin{theorem}\label{thm: 5th_post_1} Let $p \geq 1$. Suppose that $\Bbb R \ni t \to v_t \in \PSH_\omega \cap L^\infty$ is a weak geodesic line and $u \in \textup{PSH}_\omega \cap L^\infty$. There exists a unique geodesic line $\Bbb R \ni t \to u_t  \in \PSH_\omega \cap L^\infty$ with $u_0 = u$ and $d_p(u_t,v_t) = d_p(u_0,v_0)$ for any $t \in \Bbb R$ if and only if
\begin{equation}\label{eq: cond_5th_post_1}
u = \sup_{\tau \in \Bbb R} P[\hat v_\tau](u).
\end{equation}
Moreover, if \eqref{eq: cond_5th_post_1} holds then $\hat u_\tau = P[\hat v_\tau](u), \  \tau \in \Bbb R$.
\end{theorem}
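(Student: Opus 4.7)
The plan is to apply the Ross--Witt Nystr\"om correspondence of Theorem \ref{thm: finite_en_line_char} and reduce the existence (and uniqueness) of a parallel bounded line $t\to u_t$ with $u_0=u$ to the existence of a bounded zero mass test line with the correct supremum. The natural candidate for the Legendre transform is $\tau\mapsto P[\hat v_\tau](u)$; condition \eqref{eq: cond_5th_post_1} is then exactly the requirement that this candidate has supremum $u$. Both directions of the biconditional, as well as the Moreover statement, will follow from identifying this envelope as $\hat u_\tau$.

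For necessity, let $\{u_t\}$ be a parallel bounded line with $u_0 = u$. Decomposing it into the two bounded geodesic rays $u^\pm_t$ of \eqref{eqn: conc_rays}, each is parallel to $v^\pm_t$ and emanates from $u$. By uniqueness of parallel rays \cite[Proposition 4.1]{DL20} and the fact that such a parallel ray is obtained by applying the maximization \eqref{eq: maxim_def} at $u$ to the corresponding test curve (cf. Remark \ref{rem: ray_max}), I conclude via Theorem \ref{thm: max_test_curve_ray_duality} that
\[
\hat u^+_\tau = P[\hat v^+_\tau](u), \qquad \hat u^-_\tau = P[\hat v^-_\tau](u).
\]
The elementary decomposition $\hat u_\tau = \min(\hat u^+_\tau, \hat u^-_{-\tau})$ (and the same for $\hat v_\tau$, cf.~also Lemma \ref{lem: tau_line_ray}) then reduces the identity $\hat u_\tau = P[\hat v_\tau](u)$ to the envelope/min swap
\[
\min\bigl(P[\chi_1](u), P[\chi_2](u)\bigr) = P\bigl[\min(\chi_1,\chi_2)\bigr](u),
\]
which, in our setting where $\min(\chi_1,\chi_2) = \hat v_\tau$ is automatically $\omega$-psh by Kiselman's minimum principle, follows in the spirit of \cite[Lemma 5.2]{DDL5} (as used in the proof of Proposition \ref{prop: turn_RWN}). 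Taking $\sup_\tau$ yields $u = u_0 = \sup_\tau \hat u_\tau = \sup_\tau P[\hat v_\tau](u)$.

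Conversely, assuming \eqref{eq: cond_5th_post_1}, let $w_\tau$ denote the $\tau$-usc regularization of $\tau\mapsto P[\hat v_\tau](u)$. I would verify that $\tau\to w_\tau$ is a bounded zero mass test line: boundedness above by $u$ and $\sup_\tau w_\tau = u \in L^\infty$ come from \eqref{eq: cond_5th_post_1}; zero mass from $\int_X\omega^n_{w_\tau} = \int_X \omega^n_{\hat v_\tau} = 0$ via \cite[Theorem 1.3]{DDL2}; $\tau$-concavity since $\tfrac12(P[\hat v_{\tau_0}](u) + P[\hat v_{\tau_1}](u))$ is $\omega$-psh, dominated by $u$, and has singularity type at most $\hat v_{(\tau_0+\tau_1)/2}$ by $\tau$-concavity of $\hat v$, hence competes in the envelope defining $w_{(\tau_0+\tau_1)/2}$. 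Theorem \ref{thm: finite_en_line_char} then produces a bounded geodesic line $u_t := \check w_t$ with $u_0 = u$, whose Legendre transform equals $w_\tau = P[\hat v_\tau](u)$ (the necessity argument a posteriori showing that the usc regularization was unnecessary). For parallelism, I identify $\{\check w_t\}_{t\ge 0}$ with the unique parallel ray from $u$ to $\{v_t\}_{t\ge 0}$ (its Legendre transform $\sup_{\sigma \ge \tau} w_\sigma$ matching $P[\hat v^+_\tau](u)$ via the analogous sup/envelope identity), and likewise for negative times; \cite[Proposition 4.1]{DL20} then gives $d_p(\check w_t, v_t) \le d_p(u, v_0)$ for all $t$, which Buseman convexity of $d_p$ via \cite[Theorem 1.5]{CC3} upgrades to constancy. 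Uniqueness is automatic: the necessity argument forces $\hat u_\tau = P[\hat v_\tau](u)$, and the bijectivity in Theorem \ref{thm: finite_en_line_char} recovers $\{u_t\}$. I expect the main technical obstacle to be the two envelope identities (min and sup) for $P[\cdot](u)$; both should yield to extremal-property arguments of the type underlying Proposition \ref{prop: turn_RWN}, using the relative pluripotential theory already developed in the paper.
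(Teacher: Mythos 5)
Your overall strategy coincides with the paper's: the candidate Legendre transform is $\tau\mapsto P[\hat v_\tau](u)$, condition \eqref{eq: cond_5th_post_1} is read as saying that this candidate has supremum $u$, and Theorem \ref{thm: finite_en_line_char} together with convexity of $t\mapsto d_p(u_t,v_t)$ closes the argument. In the necessity direction your route (identifying $\hat u^{\pm}_\tau$ with the maximizations $P[\hat v^{\pm}_\tau](u)$ via Remark \ref{rem: ray_max} and then swapping $\min$ with $P[\cdot](u)$) is workable but heavier than the paper's: after normalizing $v_0\le u\le v_0+C$, monotonicity of parallel rays gives $v_t\le u_t\le v_t+C$, hence $\hat v_\tau\le\hat u_\tau\le\hat v_\tau+C$, so $[\hat u_\tau]=[\hat v_\tau]$ and maximality of $\tau\to\hat u_\tau$ (Proposition \ref{prop: turn_RWN}) yields $\hat u_\tau=P[\hat u_\tau](u)=P[\hat v_\tau](u)$ directly; note that the same sandwich is exactly what makes your $\min$/envelope swap legitimate, so you should record it. Likewise, the sup/envelope identity you invoke for parallelism is unnecessary: $\hat v_\tau\le w_\tau\le\hat v_\tau+C$ gives $|\check w_t-v_t|\le C$, hence $t\mapsto d_p(\check w_t,v_t)$ is bounded by \cite[Theorem 5.5]{Da15} and therefore constant by convexity.

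The one genuine gap is the $\tau$-usc regularization in the sufficiency direction. For fixed $x$, the concave function $\tau\mapsto P[\hat v_\tau](u)(x)$ can fail to be usc precisely at the endpoints of the ($x$-dependent) interval where it is finite, so the pointwise-in-$\tau$ regularization $w_\tau$ you propose is not obviously $\omega$-psh at those critical $\tau$, and your ``a posteriori'' remark is circular: Theorem \ref{thm: finite_en_line_char} can only be applied once one already has a bona fide zero mass test line, while the necessity argument only becomes available after the line has been produced. The paper resolves this by passing to the inverse Legendre transform $\phi_t=\sup_\tau\big(P[\hat v_\tau](u)+t\tau\big)$, taking the usc envelope on $\mathbb{C}\times X$ (which is automatically $\pi^*\omega$-psh), and then running a Fubini/slicing argument to show that the Legendre transform of $\textup{usc}\,\phi$ agrees with $P[\hat v_\tau](u)$ for every $\tau$ in the open interval $(\tau^-_v,\tau^+_v)$; this is what transfers the zero mass property and the identity $\textup{usc}\,\phi(0,\cdot)=u$ to the regularized object. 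Your sketch needs this (or an equivalent) argument to be complete.
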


\begin{proof} First we argue uniqueness. Assume that there are two parallel lines $\Bbb R \ni t \to u'_t,u_t \in \mathcal E^p$ with $u'_0 = u_0=u$. These lines are parallel to each other as well. By the comments preceding the theorem, we get that $d_p(u'_t,u_t) = d_p(u'_0,u_0)=0, \ t \in \Bbb R$, proving that these lines are indeed the same.

After adding a constant we can assume $v_0 \leq u$. Moreover, there exists $C>0$ such that 
\begin{equation}\label{eq: v_0_u_assumpt}
v_0 \leq u \leq v_0 + C
\end{equation}

First suppose that the parallel line $t \to u_t$ in the statement of the theorem exists. As a result, the rays $t \to v^\pm_t, u^\pm_t, v^\pm_t + C$ are parallel to each other. Using the monotonicity property of parallel rays (recalled before the theorem) we obtain that $ v^\pm_t\leq u^\pm_t \leq v^\pm_t + C$. Consequently, $ v_t\leq u_t \leq v_t + C$ and after taking Legendre transforms we obtain that
\begin{equation}\label{eq: u_v_eqv}
\hat v_\tau \leq \hat u_\tau \leq \hat v_\tau + C, \ \tau \in \Bbb R.
\end{equation}
By the maximality property of these potentials (Proposition \ref{prop: turn_RWN}), we get that \begin{equation}\label{eq: }
\hat u_\tau = P[\hat u_\tau](u_0) = P[\hat u_\tau](u)=P[\hat v_\tau](u), \ \tau \in \Bbb R.
\end{equation}
This proves that last statement of the Theorem. By inverse duality, we obtain that $u = u_0 = \sup_{\tau \in \Bbb R} \hat u_\tau = \sup_{\tau \in \Bbb R} P[\hat v_\tau](u_0)$, proving \eqref{eq: cond_5th_post_1}, as desired.

Now we suppose that \eqref{eq: cond_5th_post_1} holds. We introduce the following `candidate' test line: 
$$\tilde u_\tau:= P[\hat v_\tau](u), \ \tau \in \Bbb R.$$
Since $\tau \to \hat v_\tau$ is $\tau$-concave, by definition of $P[\hat v_\tau](u)$ it follows that so is $t\to \hat u_\tau$.

Since $P[\hat v_\tau](v_0)=\hat v_\tau $ (Proposition \ref{prop: turn_RWN}) and $u$ is bounded, it follows that $[\hat v_\tau] = [\tilde u_\tau]$ and and \cite[Theorem 1.2]{WN19} implies that  is $\tau \to \tilde u_\tau$  is zero mass.  However it is not immediately clear that $\tau \to \tilde u_\tau$ is also $\tau$-usc. To deal with this, we will need to take some sort of $\tau$-usc envelope of $\tau \to \tilde u_\tau$ and work with that.
To start, we apply the inverse Legendre transform:
$$\phi_t:= \sup_{\tau \in [\tau^-_v,\tau^+_v]} (\hat u_\tau + t \tau).$$
Let $\phi(s,z) := \phi_{\textup{Re }s}(z)$. By Choquet's lemma, $\textup{usc }\phi \in \PSH_{\pi^*\omega}(\Bbb C \times X)$, hence $t \to \textup{usc }\phi(t,\cdot)$ is a subgeodesic line, but it is not clear if $\textup{usc }\phi = \phi$.

Let $S = \{\phi < \textup{usc }\phi\}$. Since $\phi$ and $\textup{usc }\phi$ are $i\Bbb R$ invariant, so is $S  \subset \Bbb C \times X$, hence $S = S_\Bbb R  \times i \Bbb R$, where $S_\Bbb R \subset\Bbb  R \times X$. 

By definition $S$ is pluripolar, hence it and $S_{\Bbb R}$ have Lebesgue measure zero. Given $x \in X$, let $S_x = S_\Bbb R \cap \Bbb R \times \{x\}$ be the $x$-slice of $S_\Bbb R$. By Fubini's theorem, there exists $F \subset X$ of Lebesgue measure zero, such that $S_x \subset \Bbb R$ has Lebesgue measure zero for all $x \in X \setminus F$.

Due to \eqref{eq: v_0_u_assumpt} we get that 
\begin{equation}\label{eq: v_0_u_assumpt_1}
v_t \leq \phi(t,x) \leq \textup{usc }\phi(t,x) \leq v_t + C, \ t \in \Bbb R,
\end{equation}
hence both $t \to \phi(t,x)$ and $t \to \textup{usc }\phi(t,x)$ are convex, finite, and continuous. Consequently, $S_x$ is actually empty (not just measure zero) for all $x \in X \setminus F$.

Let $\chi_\tau := \inf_{t \in \Bbb R} ( {\textup{usc } \phi}(t,\cdot) - t \tau).$ Since $t \to {\textup{usc } \phi}(t,\cdot)$ is a sublinear subgeodesic line, we get that $\tau \to \chi_\tau$ is a test line with $\tau^-_\chi = \tau^-_v$ and $\tau^+_\chi = \tau^+_v$ (Proposition \ref{prop: RWN_sublinear_subgeod_lines}). 

We next argue that $\chi_\tau = \tilde u_\tau$ for $\tau \in (\tau^-_\chi,\tau^+_\chi)$ making $\tau \to \chi_\tau$ a zero mass test line.
Indeed, we fix $\tau_0 \in (\tau^-_\chi,\tau^+_\chi)$. Let $\varepsilon>0$ such that $(\tau_0-\varepsilon,\tau_0+\varepsilon) \subset (\tau^-_\chi,\tau^+_\chi)$.

Let $G := \{\chi_{\tau_0 - \varepsilon} = -\infty\} \cup\{\chi_{\tau_0 + \varepsilon} = -\infty\}$. This set $G$ also has zero Lebesgue mass, hence if $x \in X \setminus (F \cup G)$ then $\tau \to \tilde u_\tau(x)$ is concave and finite on $(\tau-\varepsilon,\tau+\varepsilon)$, hence also continuous. Due to Legendre duality, for any $x \in X \setminus (F \cup G)$ fixed, the function $\tau \to \chi_\tau(x)$ is the $\tau$-usc envelope of $\tau \to \tilde u_\tau(x)$. These last two facts imply that for $x \in X \setminus (F \cup G)$ we have $\chi_{\tau_0}(x) = u_{\tau_0}(x)$. Since both $\chi_{\tau_0}$ and $u_{\tau_0}$ are qpsh and $F \cup G$ has Lebesgue measure zero, we get that $\chi_{\tau_0}(x) = u_{\tau_0}(x)$ for all $x \in X$.
 
By Theorem \ref{thm: RWN_lines} we obtain that $t \to \textup{usc }\phi(t, \cdot)$ is a bounded weak geodesic line. Due to \eqref{eq: v_0_u_assumpt_1} we obtain that  $|\textup{usc }\phi(t, \cdot) - v_t| \leq C, t \in \Bbb R$, hence $t \to d_p(\textup{usc }\phi(t, \cdot),v_t)$ is bounded \cite[Theorem 5.5]{Da15}. Due to \eqref{eq: cond_5th_post_1} we obtain that $\phi_0 = u$. Note that $ u - C |t| \leq \phi_t \leq u + C |t|$ for some $C>0$. Consequently, also $u - C |t| \leq \textup{usc } \phi(t,\cdot) \leq u + C |t|$. In particular, $\textup{usc } \phi(0,\cdot) =u$, hence $t \to \phi(0,\cdot)$ is the parallel geodesic line we sought to construct.
\end{proof}

We conclude by showing that condition \eqref{eq: cond_5th_post_1} does not hold for arbitrary rays $t \to v_t$ and potentials $u$.

Let $(X,\omega)$ be a Riemann surface with $\int_X \omega =1$ and $t\to v_t$ a (non-trivial) bounded geodesic line; after normalizing, we may assume $\tau_v^- = 0$ and $\tau_v^+ = 1$. Then by Proposition \ref{prop: class_RS}, there exists a bounded concave function $g(\tau)$, $\tau\in [0, 1]$, such that $\hat{v}_t = (1-\tau)\hat v_0 + \tau\hat v_1 + g(\tau)$. Since we are only interested in lines parallel to $t\to v_t$, we may also assume that $\sup_X \hat{v}_0 = \sup_X \hat{v}_1 = 0$ and $\hat{v}_0\not=\hat{v}_1$.

Consider the `candidate' test line $\tau\to P[\hat{v}_\tau](0)$, coming from setting $u = 0$. By \cite[Lemma 4.1]{ACCP}, we have $P[\hat{v}_\tau](0) = \hat{v}_\tau + h(\tau)$ for some bounded function $h(\tau)$, $\tau\in[0,1]$. Since $\sup_X P[\hat{v}_\tau](0) =0$, we must have $h(\tau) = -g(\tau) - \sup_X[(1-\tau)\hat{v}_0 + \tau\hat{v}_1]$.

If the geodesic line defined by $\tau \to P[\hat{v}_\tau](0)$ were to pass through 0, then by Theorem \ref{thm: 5th_post_1} we must have:
\begin{flalign}\label{eq: u_cond}
0 = \sup_{\tau \in [0,1]}P[\hat v_\tau](0)(x) &= \sup_{\tau \in [0,1]} (1-\tau) \hat v_0(x) + \tau \hat v_1(x) - \sup_X [(1-\tau)\hat{v}_0 + \tau\hat{v}_1]
\end{flalign}
for all $x\in X$. In particular, we see that $\{\hat{v}_0 = -\infty\} \subseteq \{\hat{v}_1 = 0\}$, which will generally not be true. It follows that 
the fifth postulate does not hold for $(\mathcal E^p_\omega, d_p)$.

\footnotesize
\let\OLDthebibliography\thebibliography 
\renewcommand\thebibliography[1]{
  \OLDthebibliography{#1}
  \setlength{\parskip}{1pt}
  \setlength{\itemsep}{1pt}
}

\normalsize
\noindent {\sc Department of Mathematics, University of Maryland}\\
{\tt tdarvas@umd.edu}\vspace{0.1in}

\noindent {\sc Department of Mathematics, Purdue University}\\
{\tt  nmccleer@purdue.edu}\vspace{0.1in}
\end{document}